\providecommand{\MR}{\relax\ifhmode\unskip\space\fi MR }
\providecommand{\href}[2]{#2}
\theoremstyle{plain}
\newtheorem{thm}{Theorem}
\newtheorem{lem}{Lemma}
\newtheorem{prop}{Proposition}
\newtheorem*{clm}{Claim}
\theoremstyle{remark}
\newtheorem{rem}[thm]{Remark}
\newcommand{\disp}{\displaystyle}
\DeclareMathOperator{\di}{div}
\DeclareMathOperator{\sgn}{sgn}
\DeclareMathOperator{\loc}{loc}
\newcommand{\eps}{\varepsilon}
\newcommand{\vp}{\varphi}
\newcommand{\al}{\alpha}
\newcommand{\be}{\beta}
\newcommand{\ga}{\gamma}
\newcommand{\de}{\delta}
\newcommand{\Ga}{\Gamma}
\newcommand{\te}{\theta}
\newcommand{\la}{\lambda}
\newcommand{\Om}{\Omega}
\newcommand{\nid}{\noindent}
\newcommand{\iny}{\infty}
\newcommand{\del}{ \partial}
\newcommand{\su}{\subset}
\newcommand{\LP}{\Delta}
\newcommand{\gr}{\nabla}
\newcommand{\norm}[1]{\left\| #1\right\|}
\newcommand{\innp}[1]{\left< #1 \right>}
\newcommand{\abs}[1]{\left\vert#1\right\vert}
\newcommand{\set}[1]{\left\{#1\right\}}
\newcommand{\brac}[1]{\left[#1\right]}
\newcommand{\pr}[1]{\left( #1 \right) }
\newcommand{\WT}[1]{\ensuremath{\widetilde{#1}}}
\newcommand{\N}{\ensuremath{\mathbb{N}}}
\newcommand{\R}{\ensuremath{\mathbb{R}}}
\newcommand{\C}{\ensuremath{\mathbb{C}}}
\def\XXint#1#2#3{{\setbox0=\hbox{$#1{#2#3}{\int}$}
\vcenter{\hbox{$#2#3$}}\kern-.5\wd0}}
\numberwithin{equation}{section}
\date{}
\begin{document}

\title{On Landis' conjecture in the plane for \\ real-valued potentials with decay}
\author[Davey]{Blair Davey}
\address{Department of Mathematical Sciences, Montana State University, Bozeman, MT, 59717}
\email{blairdavey@montana.edu}
\thanks{B. D. is supported in part by the NSF CAREER DMS-2236491 and NSF LEAPS-MPS grant DMS-2137743.}
\subjclass[2010]{35B60, 35J10}
\keywords{Landis conjecture, unique continuation, Schr\"odinger equation}

\begin{abstract}
We investigate the quantitative unique continuation properties of real-valued solutions to planar Schr\"odinger equations with potential functions that exhibit pointwise decay at infinity.
That is, for equations of the form $-\LP u + V u = 0$ in $\R^2$, where $\abs{V(z)} \lesssim \innp{z}^{-N}$ for some $N > 0$, we prove that real-valued solutions satisfy exponential decay estimates with a rate that depends explicitly on $N$.
Examples show that the estimates established here are essentially sharp.
The case of $N = 0$ corresponds to the Landis conjecture, which was proved for real-valued solutions in the plane in \cite{LMNN20}, while the case of $N < 0$ was previously investigated by the author in \cite{Dav24}.
Here, the proof techniques rely on the ideas presented in \cite{LMNN20} combined with conformal transformations and an iteration scheme. 
\end{abstract}

\maketitle

\section{Introduction}

This article is concerned with the quantitative unique continuation of solutions to elliptic Schr\"odinger equations in the plane.
We consider equations with potentials that exhibits pointwise decay at infinity.
The main result is a nearly sharp estimate for the optimal rate of decay at infinity for real-valued solutions.
This theorem may be interpreted as a quantitative Landis-type theorem.

In the late 1960s, E.~M.~Landis \cite{KL88} conjectured that if $u$ and $V$ are bounded functions that satisfy
\begin{equation}
\label{ePDE}
-\LP u + V u = 0 \; \text{ in } \, \R^n,
\end{equation}
and $u$ decays faster than exponentially, i.e., $\abs{u(x)} \lesssim \exp\pr{- c \abs{x}^{1+}}$, then it follows that $u \equiv 0$.
This conjecture was later disproved by Meshkov \cite{M92} who constructed non-trivial complex-valued functions $u$ and $V$ that solve \eqref{ePDE} in $\R^2$, where $V$ is bounded and $\abs{u(x)} \lesssim \exp\pr{- c \abs{x}^{4/3}}$. 
Using Carleman estimate techniques, Meshkov also proved a \textit{qualitative unique continuation} result: 
If \eqref{ePDE} holds, where $V$ is bounded and $u$ satisfies a decay estimate of the form $\abs{u\pr{x}} \lesssim \exp\pr{- c \abs{x}^{4/3+}}$, then necessarily $u \equiv 0$.

In their work on Anderson localization \cite{BK05}, Bourgain and Kenig established a quantitative version of Meshkov's result. 
As a first step in their proof, they used three-ball inequalities derived from Carleman estimates to establish \textit{order of vanishing} estimates for local solutions to Schr\"odinger equations of the form \eqref{ePDE}.
Then, through a scaling argument, they proved a \textit{quantitative unique continuation} result.
More specifically, they showed that if $u$ and $V$ are bounded and satisfy \eqref{ePDE}, and $u$ is normalized so that $\abs{u(0)} \ge 1$, then for sufficiently large values of $R$,
\begin{equation}
 \inf_{|x_0| = R}\norm{u}_{L^\iny\pr{B_1(x_0)}} \ge \exp{(-CR^{\be}\log R)},
\label{est}
\end{equation} 
where $\be = \frac 4 3$.
Since $ \frac 4 3 > 1$, the constructions of Meshkov, in combination with the qualitative and quantitative unique continuation theorems just described, indicate that Landis' conjecture cannot be true for complex-valued solutions in $\R^2$.
However, at the time, Landis' conjecture still remained open in the real-valued and higher-dimensional settings.

The first significant step towards resolving Landis' conjecture in the real-valued planar setting was made by Kenig, Silvestre and Wang in \cite{KSW15} where they proved a quantitative form of Landis' conjecture under the assumption that the zeroth-order term satisfies $V \ge 0$ a.e.
The techniques in \cite{KSW15} exploit the relationship between real-valued solutions to second-order elliptic PDEs in the plane and solutions to complex-valued Beltrami equations.
Using similar ideas, analogous results were established in the settings with drift terms \cite{KW15}, variable coefficients \cite{DKW17}, singular lower-order terms \cite{DW20}, and when $V_-$ exhibits decay at infinity \cite{DKW19, Dav20a}.
Then, in \cite{LMNN20}, Logunov, Malinnikova, Nadirashvili, and Nazarov proved Landis' conjecture in the real-valued planar setting.
Their proof uses the nodal structure of the domain along with a domain reduction technique to eliminate any sign condition on the zeroth-order term.
Many of the ideas from \cite{LMNN20} are used in this article.

In \cite{Dav14}, we studied the quantitative unique continuation properties of solutions to elliptic equations of the form 
$$\LP u + W \cdot \gr u + V u = \la u \; \text{ in } \; \R^n,$$
where $V$ and $W$ exhibit pointwise decay at infinity, and $\la \in \C$.
With $\innp{x} = \sqrt{1 + \abs{x}^2}$, it was shown that if $\abs{V\pr{x}} \lesssim \innp{x}^{-N}$ and $\abs{W\pr{x}} \lesssim \innp{x}^{-P}$ for $N, P \ge 0$, then the quantitative estimate \eqref{est} holds with $\be = \max \set{1, \frac{4-2N}{3}, 2 - 2P}$ and $\log R$ replaced by a different slowly-decaying function.
These quantitative estimates were generalized in \cite{LW14}, where Lin and Wang proved analogous estimates for solutions to the corresponding equations with variable-coefficient leading terms.
Qualitative estimates for similar equations are given in \cite{CS97}.
The constructions presented in \cite{Dav14, Dav15} show that the estimates described in this paragraph are sharp.

In \cite{Dav24}, we studied quantitative unique continuation at infinity for real-valued solutions to \eqref{ePDE} when $n = 2$ and the potential exhibits growth at infinity, i.e., $\abs{V(z)} \lesssim \abs{z}^N$ for $N > 0$.
The techniques in that article rely heavily on the ideas in \cite{LMNN20} and careful scaling arguments.
Here, we address the more difficult setting where the potential exhibits decay at infinity.
The precise statement of the main theorem is as follows.

\begin{thm}[Main Theorem]
\label{mainThm}
For some $a_0 \ge 1$ and $N \in (0, 2)$, let $V : \R^2 \to \R$ satisfy 
\begin{equation}
\label{Vbound}
\abs{V(w)} \le a_0^2 \innp{w}^{-N}.
\end{equation}
Assume that $u : \R^2 \to \R$ is a solution to 
\begin{equation}
\label{ellipEq}
- \LP u + V u = 0 \, \text{ in } \R^2
\end{equation}
with the properties that 
\begin{equation}
\label{solNorm}
\abs{u(0)} = 1
\end{equation}
and for each $w \in \R^2$,
\begin{equation}
\label{uBound}
\abs{u(w)} \le \exp\pr{c_0 \abs{w}^{1 - \frac N 2}}.
\end{equation}
For every $\eps \in \pr{0, \frac N 2}$, there exists ${R}_0(N, a_0, c_0, \eps) > 0$ so that whenever $R \ge {R}_0$, it holds that
\begin{equation}
\label{uLower}
\inf_{\abs{w_0} = R} \norm{u}_{L^\iny\pr{B_1(w_0)}} \ge \exp\pr{-R^{1 - \frac N 2 + \eps}}.
\end{equation}
\end{thm}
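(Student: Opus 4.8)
The plan is to reduce the decaying-potential problem to the already-resolved Landis conjecture of \cite{LMNN20} (the $N=0$ case) by means of a conformal change of variables that trades the spatial decay of $V$ for a rescaling of the domain, and then to patch the rescaled estimates together via an iteration over dyadic annuli. The starting point is the observation that the Laplacian in the plane is conformally covariant: if $\Phi$ is a holomorphic map and $v = u \circ \Phi$, then $\LP v = \abs{\Phi'}^2 (\LP u)\circ \Phi$, so $v$ solves $-\LP v + \WT V v = 0$ with $\WT V = \abs{\Phi'}^2 \, (V\circ\Phi)$. The idea is to choose, on the dyadic annulus $\set{\abs{w} \sim \rho}$, a map $\Phi$ that expands a unit-scale region to a region of size comparable to $\rho^{N/2}$, so that $\abs{\Phi'}^2 \sim \rho^N$ while $\abs{V \circ \Phi} \lesssim \rho^{-N}$; the product $\WT V$ is then bounded by an absolute constant on that chart. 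On each such chart the hypotheses of the real-valued planar Landis theorem (equivalently, the Bourgain--Kenig-type order-of-vanishing bound with $\be = 1$) apply with uniform constants.

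First I would set up the conformal charts precisely: for a dyadic radius $\rho = 2^k$ with $R_0 \le \rho \le R$, use a map (a shifted power map $z \mapsto c\, z^{1}$ composed with translation, or more simply an affine dilation combined with the exact conformal factor bookkeeping) carrying a ball of radius $\sim \rho^{N/2}$ around a point at distance $\rho$ from the origin to a ball of radius $\sim 1$, arranged so the bound \eqref{Vbound} transforms to $\abs{\WT V}\lesssim 1$. Second, I would track how the growth hypothesis \eqref{uBound} transforms: on the chart around radius $\rho$ the function $v$ obeys $\abs{v} \le \exp(c_0\, (\rho^{N/2}\cdot\text{chart radius})^{1-N/2})$; since the chart has radius $\sim \rho^{N/2}$ in the $u$-variable but the solution estimate we need is local, the relevant growth exponent in the rescaled variable is $1$, so the quantitative Landis estimate \eqref{est} with $\be = 1$ and a logarithmic correction applies on each chart. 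Third, I would run the iteration: the lower bound $\abs{u}\ge 1$ at the origin propagates outward annulus by annulus, losing at each dyadic step a factor governed by the three-ball/Harnack-type inequality on the corresponding chart; summing the losses $\sum_{k} (\text{chart-scale})^{1}\log(\cdot)$ over $k$ from $\log R_0$ to $\log R$ and converting back to the original variable produces a bound of the form $\exp(-C R^{1-N/2}\log R)$, which is absorbed into $\exp(-R^{1-N/2+\eps})$ once $R \ge R_0(N,a_0,c_0,\eps)$ — this is exactly where the arbitrary $\eps>0$ and the restriction $N \in (0,2)$ (so that $1 - N/2 \in (0,1)$ and the exponent stays subunital) enter.

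The main obstacle I anticipate is the bookkeeping of constants through the iteration, together with the mismatch between the \emph{local} nature of the Landis estimate and the \emph{global} growth hypothesis \eqref{uBound}. The conformal map that normalizes $V$ on the annulus $\set{\abs{w}\sim\rho}$ distorts the rest of the plane badly, so one cannot use a single global change of variables; instead each chart only controls $u$ on a region of diameter $\sim \rho^{N/2}$, and the growth bound \eqref{uBound} must be re-derived on each chart with the correct exponent — this is precisely why the hypothesis is calibrated to $\abs{w}^{1-N/2}$ rather than $\abs{w}^{1}$. A secondary technical point is ensuring that the "starting point" of the iteration is legitimate: one needs the normalization $\abs{u(0)}=1$ together with interior elliptic estimates to get a genuine lower bound on $\norm{u}_{L^\infty(B_1(w_0))}$ for $\abs{w_0}$ of order $R_0$, from which the outward propagation begins; this uses standard elliptic regularity for \eqref{ellipEq} and the boundedness of $V$ near the origin. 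Finally I would verify sharpness is consistent with the Meshkov-type constructions referenced in the introduction, though that is not part of proving \eqref{uLower}.
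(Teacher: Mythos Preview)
Your proposal has a genuine gap: the dyadic iteration does not close. When you apply a local three-ball/Landis inequality on each chart, the output lower bound depends on the input lower bound $L_{k-1}$ from the previous step. Concretely, the LMNN-type three-ball inequality (Proposition \ref{InductiveProp} in the paper) gives $\|v\|_{L^\infty(B_r)} \ge (r/R)^\tau$ with $\tau \gtrsim (c_1 R + L_{k-1}) R/S$, so with $r\sim 1$ and $R\sim S\sim \rho^{1-N/2}$ one gets $L_k \gtrsim (R_k + L_{k-1})\log R_k$. After $K \sim \log_2 R$ dyadic steps the factor $\prod_k C\log R_k$ blows up like $(\log R)^{c\log R}$, not like $\log R$; the losses compound multiplicatively rather than add, and the claimed bound $\exp(-CR^{1-N/2}\log R)$ does not follow. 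Your remark that ``one cannot use a single global change of variables'' is also the wrong diagnosis.

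The paper proceeds quite differently. It uses a \emph{single global} power map $T_\al(z)=z^\al$ with $\al=\frac{2}{2-N}$, under which $v=u\circ T_\al$ solves $-\LP v + Wv=0$ with $W(z)=\al^2|z|^{2\al-2}V(T_\al(z))$ \emph{globally bounded} by $(a_0\al)^2$ (since $2\al-2=\al N$ exactly cancels the decay), and $|v(z)|\le e^{c_0|z|}$. Applying the LMNN Landis theorem once to $u$ and pulling back gives $\|v\|_{L^\infty(B_1(z_1))}\ge \exp(-|z_1|^{\al+\de})$ with exponent $\al+\de>1$. The crucial step is then a self-improvement iteration: Proposition \ref{InductiveProp} is applied with radii $R_k=|z_{k-1}|^{\al_{k-1}+\de}$ and $S_k=\tfrac12|z_{k-1}|$ (so $R_k/S_k\sim |z_{k-1}|^{\al_{k-1}+\de-1}$ is large), which reduces the exponent from $\al_{k-1}+\de$ to $\al_k+\de$ with $\al_k=2-\tfrac{1}{\al_{k-1}}\downarrow 1$. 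The number of iterations $\ell$ depends only on $\eps$ (through how long it takes $\al_k$ to drop below $1+\de$), \emph{not} on $R$, so there is no compounding issue. After undoing $T_\al$, the exponent $1+2\de$ in $z$-coordinates becomes $(1-\tfrac N2)(1+2\de)=1-\tfrac N2+\eps$ in $w$-coordinates.
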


The results of \cite{Dav14} prove an estimate of the form \eqref{est} with $\be = \frac{4 - 2N}{3} = \frac 4 3 \pr{1 - \frac N 2} > 1 - \frac N 2 + \eps$.
In that article, the assumptions are the same as those in Theorem \ref{mainThm}, except that $u$ may be complex-valued.
Thus, as in the case of bounded $V$, Theorem \ref{mainThm} illustrates that better estimates hold in the real-valued planar setting.

As illustrated by the following example, Theorem \ref{mainThm} is sharp (up to $\eps$) for all $N \in (0, 2)$.
Fix $N \in (0, 2)$, then set $u(z) = \exp\pr{- \abs{z}^{1 - \frac N 2}}$.
A computation shows that $u$ satisfies \eqref{ellipEq} where
$$V(z) := \pr{1 - \frac N 2}^2 \pr{\abs{z}^{1 - \frac N 2} - 1}\abs{z}^{-1 - \frac N 2}$$ 
satisfies $\abs{V(z)} \lesssim \abs{z}^{-N}$.
On the other hand, for any $\be > 0$, with $u(z) = \abs{z}^{-\be}$ on $\abs{z} > 1$, we see that $\disp \LP u = \be^2 \abs{z}^{-\be-2}$ and therefore $u$ satisfies \eqref{ellipEq} on an exterior domain with $V(z) := \be^2\abs{z}^{-2}$.
In particular, we may not have exponential behavior when $V$ decays fast enough, which explains why we restrict ourselves to $N < 2$.

To prove Theorem \ref{mainThm}, we use an iterative argument that is reminiscent of the one in \cite{Dav14}, see also \cite{LW14, DKW19, Dav20a, Dav25}.
To initialize the iteration, we apply a quantitative estimate of the form \eqref{est} with $\be = 1$.
This result, which verifies Landis' conjecture in the real-valued planar setting, was originally proved by Logunov, Malinnikova, Nadirashvili, and Nazarov in \cite{LMNN20}, and we formulate it in Theorem \ref{LandisGrowth} below.
The iteration argument then relies on repeated applications of Proposition \ref{InductiveProp} which is proved using the ideas from \cite{LMNN20}.
Roughly speaking, Proposition \ref{InductiveProp} shows that if an estimate like \eqref{est} holds with $\be = \be_0$, then for some $x_1$ with $\abs{x_1} \gg \abs{x_0}$, another estimate like \eqref{est} holds with $x_0$ replaced by $x_1$ and $\be = \be_1 \in \brac{1, \be_0}$.
When $\be_0 = 1$, Proposition \ref{InductiveProp} isn't useful, but when $\be_0 > 1$, we can decrease the exponent, i.e., make $\be_1 < \be_0$.
Therefore, to benefit from the iteration argument, we need to transform to a situation where $\be_0 > 1$.
We observe that if $u$ is composed with the real-variable version of the conformal transformation $z \mapsto z^\al$, then the new function also satisfies a Schr\"odinger equation.
By choosing $\al > 1$ appropriately, we can ensure that the new potential function is bounded and that the new solution function satisfies a version of \eqref{est} with $\be > 1$.
By repeatedly applying Proposition \ref{InductiveProp} to the transformed equation, we can make $\be$ arbitrarily close to $1$.
Finally, to reach the conclusion, we undo the change of variables.

We use the notation $B_r(z)$ to denote a ball of radius $r > 0$ centered at the point $z$, abbreviated by $B_r$ when the center is clear.
Generic constants are denoted by $c, C$ and may change from line to line without comment.
Specific constants will be indicated by subscripts.

The article is organized as follows.
In Section \ref{harmonic}, we present a unique continuation theorem for harmonic functions in punctured domains.
The content of this section is very similar to \cite[Section 5]{LMNN20} and \cite[Section 2]{Dav24}.
The iterative result described by Proposition \ref{InductiveProp} is the content of Section \ref{localProof}.
Proposition \ref{InductiveProp} is a three-ball inequality for solutions to Schr\"odinger equations and its proof relies on the results from Section \ref{harmonic}. 
In Section \ref{TransMaps}, we introduce the real-valued versions of $z \mapsto z^\al$ and record some of their properties.
In particular, we show how solutions behave when they are composed with these transformations.
Finally, the proof of Theorem \ref{mainThm} is presented in Section \ref{MainProof}.

\section{Decay properties of harmonic functions in punctured domains}
\label{harmonic}

In this section, we present and prove quantitative unique continuation results (in the form of three-ball inequalities) for harmonic functions in punctured domains.
We recall the following application of the Harnack inequality which appears in \cite[Claim 5.2]{LMNN20} and is repeated in \cite[Lemma 2.1]{Dav24}.

\begin{lem}[Harnack application]
\label{discBounds}
Let $\set{D_j}$ be a finite collection of $100$-separated unit disks in the plane.
Assume that $h$ is real-valued and harmonic in $\R^2 \setminus \bigcup D_j$ and that for each index $j$, $h$ doesn't change sign in $5D_j \setminus D_j$.
There exists an absolute constant $C_H \ge 10$ for which
\begin{enumerate}
\item $\disp \max_{\del \pr{3 D_j}} \abs{h} \le C_H \min_{\del \pr{3 D_j}} \abs{h}$
\item $\disp \max_{\del \pr{3 D_j}} \abs{\gr h} \le C_H \min_{\del \pr{3 D_j}} \abs{h}$.
\end{enumerate}
\end{lem}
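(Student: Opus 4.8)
The plan is to reduce everything, by a translation, to a single fixed annulus, and then to deduce both assertions from Harnack's inequality for non-negative harmonic functions together with the standard interior gradient estimate; the Harnack constant for the fixed annulus is automatically absolute, and this is what produces $C_H$.

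First I would fix an index $j$ and translate so that $D_j$ is centered at the origin, so that $D_j = \overline{B_1(0)}$, $3D_j = \overline{B_3(0)}$, and $5D_j = \overline{B_5(0)}$; this is harmless since both the hypotheses and the two claimed inequalities are translation invariant while $C_H$ is to be absolute. Because the disks are $100$-separated, $5D_j$ is disjoint from every $D_k$ with $k \ne j$, so the open annulus $A := 5D_j \setminus D_j = \{1 < |z| < 5\}$ lies in $\R^2 \setminus \bigcup_k D_k$, and hence $h$ is harmonic in $A$. By hypothesis $h$ has a fixed sign on $A$, and since only $|h|$ appears I may assume $h \ge 0$ on $A$ (otherwise replace $h$ by $-h$). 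If $h \equiv 0$ on $A$ both inequalities are trivial, so assume $h \not\equiv 0$, whence $h > 0$ on $A$ by the strong maximum principle.

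For part (1), I would apply the Harnack inequality on the connected open set $A$ to the compact connected set $K := \{2 \le |z| \le 4\}$: there is a constant $C_1 = C_1(A, K)$, which is absolute, with $\sup_K h \le C_1 \inf_K h$. Since the circle $\partial(3D_j) = \{|z| = 3\}$ is contained in $K$, this gives $\max_{\partial(3D_j)} |h| \le C_1 \min_{\partial(3D_j)} |h|$. For part (2), fix $x$ with $|x| = 3$; then $\overline{B_1(x)} \subset K \subset A$, so $h$ is harmonic in a neighborhood of $\overline{B_1(x)}$, and the planar interior gradient estimate yields $|\nabla h(x)| \le 2 \sup_{\overline{B_1(x)}} |h| \le 2 \sup_K h \le 2 C_1 \inf_K h \le 2 C_1 \min_{\partial(3D_j)} |h|$, where the last two steps use the Harnack bound from part (1). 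Maximizing over $|x| = 3$ gives $\max_{\partial(3D_j)} |\nabla h| \le 2 C_1 \min_{\partial(3D_j)} |h|$. Setting $C_H := \max\{10, 2 C_1\}$ then yields both assertions.

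I do not expect a genuine obstacle here; the only points that require care are (a) verifying that $5D_j \setminus D_j$ lies entirely inside the region where $h$ is harmonic — which is exactly what the $100$-separation of the disks buys us, and is in fact far more than needed — and (b) observing that the Harnack constant on the fixed annulus $A$, and hence the constant produced when the interior gradient estimate is chained to it, can be taken independent of $j$ once $D_j$ has been translated to the origin.
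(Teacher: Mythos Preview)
Your proposal is correct and follows essentially the same approach as the paper: apply Harnack on the fixed annulus $4D_j\setminus 2D_j$ (your $K$) to obtain (1), and then use an interior gradient estimate on a unit ball around each point of $\partial(3D_j)$ for (2). The only cosmetic difference is that the paper invokes the gradient bound for sign-definite harmonic functions (\cite[Lemma 1.11]{HL11}), giving $|\nabla h(z)|\le |h(z)|$ directly, whereas you use the general Cauchy estimate and then feed the result back through Harnack; both routes yield the same conclusion.
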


\begin{proof}
An application of the Harnack inequality shows that there exists $C_H > 0$ so that for every $j$
\begin{align*}
\max_{\del \pr{3 D_j}} \abs{h}
&\le \sup_{4 D_j \setminus 2 D_j} \abs{h} 
\le C_H \inf_{4 D_j \setminus 2 D_j} \abs{h}
\le C_H \min_{\del \pr{3 D_j}} \abs{h}.
\end{align*}
For each $z \in \del \pr{3 D_j}$, since $h$ doesn't change signs in $B_2(z)$, an application of Cauchy's inequality as in \cite[Lemma 1.11]{HL11} shows that $\abs{\gr h(z)} \le \abs{h(z)}$ and the conclusion follows.
\end{proof}

Now we state and prove the main result of this section.
The following is a slight modification of \cite[Lemma 2.1]{Dav24}, which resembles the result \cite[Theorem 5.3]{LMNN20}.

\begin{prop}[Three-ball inequality for harmonic function in punctured domain]
\label{harmonicProp}
Let $\set{D_j}$ be a finite collection of $100$-separated unit disks in the plane for which $0 \notin \bigcup 3 D_j$.
For some $R \ge 2^{10}$, let $h$ be a harmonic function in $B_R \setminus \bigcup D_j$ with the property that for each index $j$, $h$ doesn't change sign in $\pr{5D_j \setminus D_j} \cap B_R$.
Assume that for some $S \in \pr{2^8, \frac R 2}$ and some $M > \log\pr{16R}$, with $T := R - \frac S {32}$, 
it holds that
\begin{equation}
\label{normalization}
\sup_{B_T \setminus \bigcup 3 D_j} \abs{h}  \ge e^{-M} \sup_{B_R \setminus \bigcup 3 D_j} \abs{h}.
\end{equation}
Then for every $r \in \pr{0, \frac R {2^{10}}}$, we have
\begin{equation}
\label{lowerBound}
\sup_{B_r \setminus \bigcup 3 D_j} \abs{h} 
\ge  \pr{\frac{16 r} R}^{\kappa} \sup_{B_T \setminus \bigcup 3 D_j} \abs{h},
\end{equation}
where $\kappa(R,S, M) = \max \set{6C_HR, 2^{13} MRS^{-1}}$ and $C_H \ge 10$ is from Lemma \ref{discBounds}. \\
\end{prop}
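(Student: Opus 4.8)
The statement is a minor reworking of \cite[Lemma 2.1]{Dav24}, which in turn follows \cite[Theorem 5.3]{LMNN20}, and the plan is to run essentially the same argument with the ranges of $r$, $S$, $M$ adjusted to the present hypotheses. First, normalize $h$ so that $\sup_{B_R\setminus\bigcup 3D_j}|h|=1$; then \eqref{normalization} reads $m(T)\ge e^{-M}$ for $m(\rho):=\sup_{B_\rho\setminus\bigcup 3D_j}|h|$, and the conclusion \eqref{lowerBound} becomes the inequality $\log m(T)-\log m(r)\le\kappa\log\frac{R}{16r}$. Two facts about $h$ do the work. Since $h$ is harmonic, $|h|$ is subharmonic on $B_R\setminus\bigcup D_j$; and since $h$ keeps a fixed sign on each annulus $(5D_j\setminus D_j)\cap B_R$, it coincides with $\pm h$ on a neighborhood of $\partial(3D_j)$, so $|h|$ is in fact harmonic there and has finite flux across $\partial(3D_j)$. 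In addition, Lemma \ref{discBounds} pins both $|h|$ and $|\nabla h|$ on $\partial(3D_j)$ to the scale $c_j:=\min_{\partial(3D_j)}|h|$, up to the factor $C_H$. These are the only features of the obstacles $D_j$ that get used.

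The core step is a Hadamard three-circle estimate for $m$ on the annulus $\{r<|z|<R\}$, adapted to the punctured domain. In the obstacle-free case $m(\rho)$ is log-convex in $\log\rho$; fed with the endpoint data $m(R)=1$ and $m(T)\ge e^{-M}$, this already yields \eqref{lowerBound} with the single exponent $2^{13}MRS^{-1}$, the factor $RS^{-1}$ appearing because $\log(R/T)=-\log\!\big(1-\tfrac{S}{32R}\big)\ge\tfrac{S}{32R}$ sits in the denominator of the governing ratio, and the constraints $S<\tfrac R2$, $r<\tfrac{R}{2^{10}}$ (hence $\log(R/r)<2\log\tfrac{R}{16r}$), and $M>\log(16R)$ providing all the slack. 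To incorporate the obstacles I would, as in \cite{LMNN20, Dav24}, compare $|h|$ on the annular region $\{r<|z|<R\}\setminus\bigcup 3D_j$ with a radial harmonic barrier of the form $A\log|z|+B$, modified so as to dominate the level $C_Hc_j\ (\ge\max_{\partial 3D_j}|h|)$ on each $\partial(3D_j)$; the hypothesis $0\notin\bigcup 3D_j$ keeps the logarithmic singularity of the barrier clear of the holes, and the finite-flux and Harnack control from Lemma \ref{discBounds} let the barrier be pushed past each disk at the cost of only a bounded multiple of $C_H$. Since the $D_j$ are $100$-separated unit disks inside $B_R$, any circle $|z|=\rho$ meets at most $O(R)$ of them, so these costs accumulate to raise the \emph{slope} of the barrier by an amount of order $C_HR$ — producing exactly the extra exponent $6C_HR$. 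Taking the larger of the two contributions gives $\kappa=\max\{6C_HR,2^{13}MRS^{-1}\}$, and then deriving \eqref{lowerBound} is a routine manipulation using the constraints on $r$, $S$, and $M$.

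The delicate point, and the one I would expect to require the most care, is precisely this obstacle bookkeeping. One must check that the loss from routing the barrier past the $3D_j$ grows only linearly in $R$ — which is where $100$-separation is essential, capping at $O(R)$ the number of disks met by any single circle — and, just as importantly, that this loss is realized as an increase in the slope of the comparison function (so that it contributes an additive $O(C_HR)$ to $\kappa$) rather than as a stand-alone additive error, which would be magnified by the $RS^{-1}$ ratio and destroy the estimate. Everything else follows the template of \cite[Theorem 5.3]{LMNN20} and \cite[Lemma 2.1]{Dav24}.
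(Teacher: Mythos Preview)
Your proposal diverges from the paper's argument in a fundamental way, and the divergence is precisely at the point you flag as ``delicate.''  Neither \cite[Theorem 5.3]{LMNN20} nor \cite[Lemma 2.1]{Dav24} proceeds via a Hadamard three-circle estimate for $|h|$ or a logarithmic barrier $A\log|z|+B$.  Instead, the paper works with the \emph{gradient}: since $h$ is harmonic, $h_x-ih_y$ is holomorphic, and the proof studies the analytic function $f(z)=(h_x-ih_y)/z^k$ on the punctured annulus $\Om=\{\tfrac r2<|z|<R-1\}\setminus\bigcup 3D_j$ with $k=\tfrac13\kappa$.  Assuming \eqref{lowerBound} fails, one bounds $|f|$ on the inner and outer boundary components $W_1,W_2$ (case analysis plus Lemma~\ref{discBounds}) and produces a point in $\Om$ with $|\nabla h|\ge \tfrac1{8R}$ by integrating along a path.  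The maximum principle then forces $\sup_\Om|f|$ to be attained on some $\partial(3D_{j_1})$.  The contradiction comes from an extremal choice: pick $j_0$ maximizing $m_j|z_j|^{-k}$ (with $m_j=\min_{\partial(3D_j)}|h|$), integrate $\nabla h$ radially from $z_{j_0}$ toward the origin, and show the path must terminate on another disk $3D_{j_2}$ with $m_{j_2}|z_{j_2}|^{-k}>m_{j_0}|z_{j_0}|^{-k}$, using $k\ge 2C_HR$ and the $100$-separation.

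Your barrier scheme has a genuine gap even before the obstacles enter.  The claim that $m(\rho)$ is log-convex in $\log\rho$ is the Hadamard theorem for \emph{holomorphic} functions, resting on subharmonicity of $\log|f|$; for a real harmonic $h$ one only has $|h|$ subharmonic, which yields convexity of $m(\rho)$ in $\log\rho$, not of $\log m(\rho)$ --- and plain convexity gives no useful lower bound on $m(r)$ from the data $m(R)=1$, $m(T)\ge e^{-M}$ once $e^{-M}$ is small.  The paper recovers a genuine holomorphic object precisely by passing to $h_x-ih_y$.  As for the obstacles, your description (``push the barrier past each disk at cost $C_H$'') does not correspond to any step in the references and is not a mechanism I can see how to implement: a comparison barrier for subharmonic $|h|$ must dominate $|h|$ on \emph{every} $\partial(3D_j)$, but those boundary values are the unknowns.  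The device that actually tames the disks is the weight $z^{-k}$ together with the extremal-index argument above --- that is the missing idea.
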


\begin{rem}
In contrast to \cite[Lemma 2.1]{Dav24}, we no longer assume that $S$ is bounded from below with respect to $R$ and we impose a lower bound on $M$.
This flexibility is useful for the iterative argument.
\end{rem}

\begin{proof}
We may assume without loss of generality that 
\begin{equation}
\label{normalizationAssump}
\sup_{B_T \setminus \bigcup 3 D_j} \abs{h} = 1.
\end{equation}
Set $k = \max \set{2C_HR, 2^{10} M R S^{-1}}$. 
For the sake of contradiction, assume that 
\begin{equation}
\label{contraBound}
\sup_{B_r \setminus \bigcup 3 D_j} \abs{h} \le \pr{\frac {16r} R}^{3k}.
\end{equation}

Define the punctured annular region
$$\Om := \set{\frac r 2 < \abs{z} < R -1} \setminus \bigcup 3 D_j$$
and the function
$$f(z) = \frac{h_x - i h_y}{z^k}.$$
Observe that $f$ is analytic in $\Om$ and $\abs{f(z)} = \abs{\gr h(z)} \abs{z}^{-k}$.
We'll analyze the behavior of $f$ over $\Om$.
We begin with bounding $h$ and $\gr h$ over the innermost and outermost parts of the boundary of $\Om$.

Let $W_1$ be the connected component of $\del \Om$ that intersects the inner circle $\set{\abs{z} = \frac r 2}$.
If $z \in W_1$, then there are three cases to consider:
\begin{itemize}
\item[(a)] $\abs{z} \ne \frac r 2$.
\item[(b)] $\abs{z} = \frac r 2$ and there exists $j$ for which $z \in 4 D_j \setminus 3 D_j$.
\item[(c)] $\abs{z} = \frac r 2$ and $z \cap 4 D_j$ is empty for all $j$.
\end{itemize}

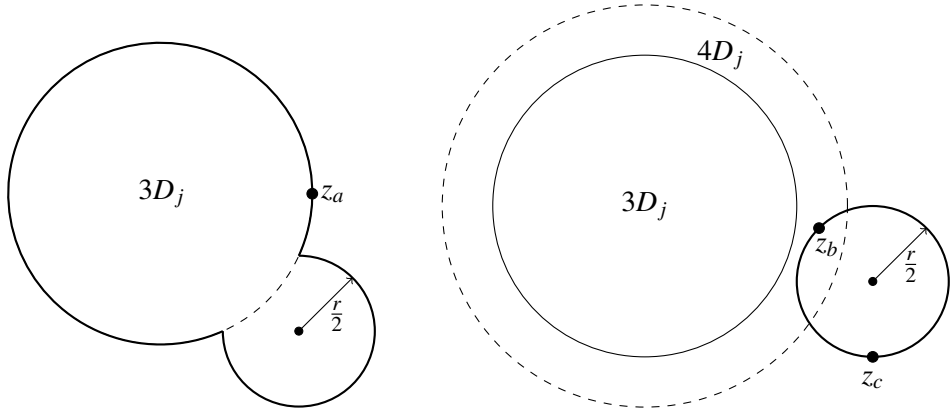
\begin{figure}[h]
\label{W1Figure}
\begin{tikzpicture}
\draw[thick] (-1,0) arc (180:450:1cm);
\draw[thick] (0,1) arc (-24.3:294.3:2cm);
\draw[dashed] (0,1) arc (-24.3:360:2cm);
\draw [fill=black] (0,0) circle (1.5pt);
\draw[->] (0,0) -- (0.707, 0.707);
\draw[color=black] (0.5,0.2) node {$\frac r 2$};
\draw [fill=black] (0.177,1.822) circle (2pt);
\draw[color=black] (-1.8,1.822) node {$3D_j$};
\draw[color=black] (0.45,1.822) node {$z_a$};
\end{tikzpicture}
\qquad 
\begin{tikzpicture}
\draw[thick] (-1,0) arc (180:540:1cm);
\draw (-1,1) arc (0:360:2cm);
\draw[dashed] (-0.333,1) arc (0:360:2.666cm);
\draw[color=black] (-3,1) node {$3D_j$};
\draw[color=black] (-2,3) node {$4D_j$};
\draw [fill=black] (0,0) circle (1.5pt);
\draw[->] (0,0) -- (0.707, 0.707);
\draw[color=black] (0.5,0.2) node {$\frac r 2$};
\draw [fill=black] (-0.707, 0.707) circle (2pt);
\draw[color=black] (-0.6,0.45) node {$z_b$};
\draw [fill=black] (0, -1) circle (2pt);
\draw[color=black] (0,-1.3) node {$z_c$};
\end{tikzpicture}
\caption{Possible images of $W_1$ with cases (a), (b) and (c) illustrated by the points $z_a$, $z_b$, and $z_c$, respectively.}
\label{projPics}
\end{figure}

\nid \textbf{Case (a):} There exists $j$ for which $z \in \del \pr{3 D_j}$ and $3D_j \cap \set{\abs{z} = \frac r 2}$ is non-empty.
An application of Lemma \ref{discBounds} combined with the fact that $\del \pr{3D_j} \cap B_R$ is non-empty shows that
\begin{align*}
\abs{h(z)}, \abs{\gr h(z)} 
&\le C_H \min_{\del \pr{3 D_j}} \abs{h} 
\le C_H \sup_{B_r \setminus \bigcup 3 D_j} \abs{h}
\le C_H \pr{\frac {16r} R}^{3k},
\end{align*}
where the last inequality follows from \eqref{contraBound}.
\\
\textbf{Case (b):} Since $h$ doesn't change signs in $B_1(z)$, then an application of \cite[Lemma 1.11]{HL11} shows that
\begin{align*}
\abs{\gr h(z)} 
&\le 2 \abs{h(z)}
\le 2 \sup_{B_R \setminus \bigcup 3 D_j} \abs{h}
\le 2  \pr{\frac {16r} R}^{3k},
\end{align*}
where the second inequality uses that $z \notin \bigcup 3 D_j$ and we have again applied \eqref{contraBound}.
\\
\textbf{Case (c):} Let $d = \min\set{1, \frac r 2}$ and observe that $B_d\pr{z} \subset B_R \setminus \bigcup 3 D_j$, so an application of Cauchy's inequality, \cite[Lemma 1.10]{HL11}, shows that
\begin{align*}
\abs{\gr h(z)} 
&\le \frac{2}{d} \sup_{B_d\pr{z}} \abs{h}
\le \frac{2}{d} \sup_{B_R \setminus \bigcup 3 D_j} \abs{h}
\le \frac 2 d  \pr{\frac {16r} R}^{3k}.
\end{align*}
If $d = 1$, since $k > 1$, then $\disp \frac 2 d \pr{\frac{16 r} R}^k =  2 \pr{\frac{16 r} R}^k < \frac{32r}{R} < \frac 1 {2^5} < \frac 1 2$.
On the other hand, if $d = \frac r 2$, then $\disp \frac 2 d \pr{\frac{16 r} R}^k = \frac 4 r \pr{\frac{16 r} R}^k = \frac{64}{R} \pr{\frac{16 r} R}^{k-1} < \frac 1 {2^4} < \frac 1 2$.

Since $k \ge 2C_H R \ge 2^{11} C_H$, then $2^{10k} \ge \max\set{C_H, 2} = C_H$ and $\pr{\frac r R}^k \le 2^{-10k} \le \frac 1 {\max\set{C_H, 2}}$.
Therefore, by combining all three cases, we see that
\begin{align}
\label{W1Bound}
\sup_{W_1} \abs{h}, \; \sup_{W_1} \abs{\gr h} \le \pr{\frac {16r} R}^{2k}.
\end{align}

Let $W_2$ be the connected component of $\del \Om$ that intersects the outer circle $\set{\abs{z} = R-1}$ and note that $W_2 \su \overline{B_{R-1}} \setminus B_{R-7}$.
Now if $z \in W_2$, there are two cases to consider:
\begin{itemize}
\item[(a)] there exists $j$ for which $z \in 4 D_j$.
\item[(b)] $\abs{z} = R-1$ and $z \cap 4 D_j$ is empty for all $j$.
\end{itemize}

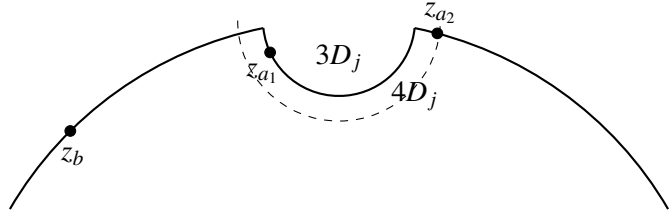
\begin{figure}[h]
\label{W1Figure}
\begin{tikzpicture}
\draw[thick] (4.33,2.5) arc (30:78.52:5cm);
\draw[thick] (-4.33,2.5) arc (150:101.48:5cm);
\draw[thick] (-0.995,4.9) arc (185.739:354.261:1cm);
\draw[dashed] (-1.333,5) arc (180:360:1.333cm);
\draw[color=black] (0,4.5) node {$3D_j$};
\draw[color=black] (1,4) node {$4D_j$};
\draw [fill=black] (1.294, 4.8296) circle (2pt);
\draw[color=black] (1.34, 5.1) node {$z_{a_2}$};
\draw [fill=black] (-0.9063, 4.577) circle (2pt);
\draw[color=black] (-1, 4.3) node {$z_{a_1}$};
\draw [fill=black] (-3.5355, 3.5355) circle (2pt);
\draw[color=black] (-3.5,3.2) node {$z_b$};
\end{tikzpicture}
\caption{A possible image of $W_2$ with case (a) illustrated by the points $z_{a_1}$ and $z_{a_2}$, and case (b) illustrated by $z_b$.}
\label{projPics}
\end{figure}

\nid\textbf{Case (a):} Since $h$ doesn't change sign in $B_1(z)$, then an application of \cite[Lemma 1.10]{HL11} shows that
\begin{align*}
\abs{\gr h(z)} 
&\le 2 \abs{h(z)}
\le 2 \sup_{B_R \setminus \bigcup 3 D_j} \abs{h}
\le 2 e^{M},
\end{align*}
where we have applied \eqref{normalization}.
\\
\textbf{Case (b):} Since $B_1(z) \su B_R \setminus \bigcup 3 D_j$, then
\begin{align*}
\abs{\gr h(z)} 
&\le 2 \sup_{B_1\pr{z}} \abs{h}
\le 2 \sup_{B_R \setminus \bigcup 3 D_j} \abs{h}
\le 2 e^M.
\end{align*}

By combining both cases, we see that
\begin{equation}
\label{W2Bound}
\sup_{W_2} \abs{h} \le e^M, \; \sup_{W_2} \abs{\gr h} \le 2 e^M.
\end{equation}

We now use these estimates on $h$ from above to understand the behavior of the analytic function $\disp f = \frac{h_x - i h_y}{z^k}$.
Define the set $\Om_1 \su \Om$ as
$$\Om_1 :=  \set{\frac r 2 < \abs{z} < T} \setminus \bigcup 3 D_j.$$
Using containment, assumption \eqref{contraBound}, and our normalization \eqref{normalizationAssump}, we see that
\begin{align*}
\sup_{B_{\frac r 2} \setminus \bigcup 3D_j} \abs{h}
\le \sup_{B_r \setminus \bigcup 3D_j} \abs{h}
\le \pr{\frac {16 r} R}^{3k}
< 1 
= \sup_{B_T \setminus \bigcup 3 D_j} \abs{h}.
\end{align*}
Therefore, there exists $z_0 \in \Om_1$ for which $\abs{h(z_0)} = 1$.
By \eqref{W1Bound}, we have 
\begin{align*}
\sup_{W_1} \abs{h} \le \pr{\frac {16r} R}^{2k} < \frac 1 2,
\end{align*}
so there exists $z_1 \in W_1$ for which $\abs{h(z_1)} = \al < \frac 1 2$.
Let $\Ga$ be a path in $\Om_1$ from $z_0$ to $z_1$ for which $\ell\pr{\Ga} \le 4 R$.
If we assume that $\abs{\gr h(z)}  < \frac 1 {8R}$ for all $z \in \Ga$, then 
\begin{align*}
\frac 1 2 
< \abs{h(z_1) - h(z_0)} 
= \abs{\int_\Ga \gr h(w) \cdot dw}
\le \int_\Ga \abs{\gr h(w)} \abs{dw}
< \frac 1 {8R} \ell\pr{\Ga}
< \frac 1 2,
\end{align*}
which is impossible so it follows that
\begin{align*}
\sup_{\Om} \abs{\gr h}
\ge \sup_{\Om_1} \abs{\gr h}
\ge \frac 1 {8R}.
\end{align*}
Therefore,
\begin{align}
\label{OmSup}
\sup_{\Om} \abs{f}
\ge \sup_{\Om_1} \abs{f}
\ge \sup_{\Om_1} \abs{\gr h} T^{-k}
\ge \frac 1 {8R} T^{-k}.
\end{align}
An application of \eqref{W1Bound} shows that
\begin{align}
\label{W1Sup}
\max_{W_1} \abs{f} 
&\le \max_{W_1} \abs{\gr h} \pr{\frac 2 r}^k
\le \pr{\frac {16r} R}^{2k} \pr{\frac 2 r}^k
= \pr{\frac {2^9 r}{R}}^k R^{-k}
< 2^{-k} R^{-k},
\end{align}
where we have used that $\frac R r > 2^{10}$.
Since $k \ge 2 C_H R \ge 20 R$, then $2^{-k} < \frac 1 {8R}$.
In particular, by combining \eqref{W1Sup} with \eqref{OmSup}, we deduce that
\begin{align*}
\max_{W_1} \abs{f} 
< \frac 1 {8R} T^{-k}
\le \sup_{\Om} \abs{f}.
\end{align*}
An application of \eqref{W2Bound} shows that
\begin{align}
\label{W2Sup}
\max_{W_2} \abs{f} 
&\le \max_{W_2} \abs{\gr h} \pr{R - 7}^{-k}
\le 2 e^M \pr{\frac{R - \frac{S}{32}}{R - 7}}^{k} T^{-k} .
\end{align}
Now
\begin{align}
\label{RBig2}
2 e^M \pr{\frac{R - \frac{S}{32}}{R - 7}}^{k} < \frac 1 {8R} 
&\iff k \log \pr{\frac{R - 7 }{R - \frac S{32}}} > M + \log\pr{16R}.
\end{align}
Since $S \ge 2^8$, then $\frac S {32} - 7 \ge \frac S {2^8}$ and we see that
\begin{align*}
\log \pr{\frac{R - 7 }{R - \frac S{32}}}
&= \log \pr{1 + \frac{\frac{S}{32} - 7 }{R - \frac S{32}}}
\ge \log \pr{1 + \frac {S}{2^8 R}}
\ge \frac{S}{2^8 R} \pr{1 - \frac{S}{2^9 R}}.
\end{align*}
As $S \le \frac R 2$, then $1 - \frac{S}{2^9 R} \ge 1 - 2^{-10} > \frac 1 2$ and we deduce that
$$\log \pr{\frac{R - 7 }{R - \frac S{32}}} > \frac{S}{2^9 R}.$$
Since $\disp k \ge 2^{10} M R S^{-1}$, then
$$\frac k 2 \log \pr{\frac{R - 7 }{R - \frac S{32}}} > M > \log\pr{16R},$$
where the second inequality is by assumption. 
Therefore, \eqref{RBig2} holds.
Combining \eqref{RBig2} with \eqref{W2Sup} and \eqref{OmSup} shows that
\begin{align*}
\max_{W_2} \abs{f} 
< \sup_{\Om} \abs{f}.
\end{align*}
Since $f$ is a holomorphic function in $\Om$, then the maximum principle guarantees that $\disp \sup_{\Om} \abs{f} = \sup_{\del \Om} \abs{f}$.
As shown above, the maximum doesn't occur on $W_1$ or $W_2$, so there must exist a disk $3 D_j \su \set{\frac r 2 < \abs{z} < R-1}$ for which $\disp \sup_{\Om} \abs{f} = \sup_{\del \pr{3D_j}} \abs{f}$.

Considering only the disks $D_j$ for which $3 D_j \su \set{\frac r 2 < \abs{z} < R-1}$, define $z_j \in \del \pr{3 D_j}$ to be the point that is closest to the origin, i.e. has the smallest modulus.
Then set $\disp m_j = \min_{\del \pr{3 D_j}} \abs{h}$.
Define $j_0$ to be the index for which
\begin{equation}
\label{j0Defn}
m_{j_0} \abs{z_{j_0}}^{-k} = \max_j m_{j} \abs{z_{j}}^{-k}
\end{equation}
and let $j_1$ be the index for which 
$$\sup_{\Om} \abs{f} = \sup_{\del \pr{3 D_{j_1}}} \abs{f}.$$
For any $z \in \Om$, an application of Lemma \ref{discBounds} shows that
\begin{equation}
\label{grhBound}
\abs{\gr h(z)} \abs{z}^{-k} 
\le \sup_{\Om} \abs{f}
= \sup_{\del \pr{3 D_{j_1}}} \abs{\gr h(z)} \abs{z_{j_1}}^{-k} 
\le C_H m_{j_1} \abs{z_{j_1}}^{-k} 
\le C_H m_{j_0} \abs{z_{j_0}}^{-k},
\end{equation}
so we see that 
\begin{equation*}
\abs{\gr h(z)} 
\le C_H m_{j_0} \pr{\frac{\abs{z}}{\abs{z_{j_0}}}}^{k}.
\end{equation*}
Since $\disp \sup_{\Om} \abs{\gr h} \ge \frac 1 {8R}$ and this estimate holds for every $z \in \Om$, then
$$\frac 1 {8R} \le C_H m_{j_0} \pr{\frac{\abs{z}}{\abs{z_{j_0}}}}^{k}\le C_H m_{j_0} \pr{\frac{2R}{r}}^{k}.$$
In particular,
\begin{equation}
\label{hzj0Bound}
\abs{h(z_{j_0})} \ge m_{j_0} \ge \frac 1 {8C_HR} \pr{\frac{r}{2R}}^{k}.
\end{equation}
Define $s = \inf\set{\tau \le 1 : t z_{j_0} \in \Om \text{ for all } t \in \pr{\tau,1}}$ so that the straight line path given by $\ga(t) = t z_{j_0}$ for $s < t < 1$ is contained in $\Om$ while $s z_{t_0} \in \del \Om$.
Integrating $\gr h$ along $\ga$ shows that
\begin{align*}
h(z_{j_0}) - h(s z_{j_0})
&= \int_\ga \gr h\pr{z} \cdot d z
= \int_{s}^1 \gr h(t z_{j_0}) \cdot z_{j_0} dt.
\end{align*}
Applications of \eqref{grhBound} and \eqref{hzj0Bound} show that 
\begin{equation}
\begin{aligned}
\label{integralBound}
\abs{h(s z_{j_0})}
&\ge \abs{h(z_{j_0})} - \abs{\int_{s}^1 \gr h(t z_{j_0}) \cdot z_{j_0} dt}
\ge \abs{h(z_{j_0})} - \abs{z_{j_0} } \abs{\int_{s}^1 C_H m_{j_0}t^{k} dt} \\
&\ge m_{j_0} - \frac{m_{j_0} C_H R }{k+1} 
= m_{j_0} \pr{1 - \frac{C_H R}{k+1}}
> \frac{m_{j_0}}{2},
\end{aligned}
\end{equation}
where the last inequality uses that $k+1 > 2C_HR$.
Since $k \ge 2C_H R$, then $2^{k} > 16 C_H R$ and it follows that 
$$\pr{\frac R r}^k \ge 2^{10k} > 16 C_H R \cdot 2^{9k}.$$
Combining \eqref{integralBound} with \eqref{hzj0Bound} shows that
$$\abs{h(s z_{j_0})} > \frac{m_{j_0}}{2} \ge \frac 1 {16 C_H R} \pr{\frac{r}{2R}}^{k} > 2^{9k} \pr{\frac r R}^k\pr{\frac{r}{2R}}^{k} = \pr{\frac{16 r} R}^{2k}.$$
By comparing this bound with \eqref{W1Bound}, we conclude that $s z_{j_0} \notin W_1$ so it must hold that $s z_{j_0} \in \del \pr{3 D_{j_2}}$ for some $3 D_{j_2} \su \set{\frac r 2 < \abs{z} < R-1}$.
Then Lemma \ref{discBounds}, that $\abs{z_{j_2}} \le \abs{s z_{j_0}}$, and \eqref{integralBound} show that 
\begin{align}
\label{j2j0Comp}
C_Hm_{j_2} \abs{z_{j_2}}^{-k} 
&\ge \sup_{\del \pr{3 D_{j_2}}} \abs{h} \abs{z_{j_2}}^{-k}
\ge \abs{h(s z_{j_0})} \abs{s z_{j_0}}^{-k}
\ge \frac{m_{j_0}}{2} s^{-k} \abs{z_{j_0}}^{-k}.
\end{align}
Since $z_{j_0} \in \del \pr{3 D_{j_0}}$ and $s z_{j_0} \in \del \pr{3 D_{j_2}}$ where $j_0 \ne j_2$, and the balls $\set{D_j}$ are of unit radius and $100$-separated, then $\abs{z_{j_0} - s z_{j_0}} \ge 96$.
After rearrangement, we see that $s^{-k} \ge \pr{1 - \frac{96}{R}}^{-k}$.
Since $10 \le C_H$, $2C_HR \le k$, and $\frac{96}{R} < - \log\pr{1 - \frac{96}{R}}$, then 
\begin{align*}
\log\pr{2C_H}
&< 96 \cdot 2 C_H
\le \frac{96}{R} k
< - k \log\pr{1 - \frac{96}{R}}
\le - k \log s,
\end{align*}
from which it follows that $s^{-k} > 2C_H$.
We then conclude from \eqref{j2j0Comp} that $m_{j_2} \abs{z_{j_2}}^{-k} > m_{j_0} \abs{z_{j_0}}^{-k}$ which contradicts \eqref{j0Defn} and gives the desired contradiction.
In other words, \eqref{contraBound} fails to hold and we see that
\begin{align*}
\sup_{B_r \setminus \bigcup 3 D_j} \abs{h} 
&> \pr{\frac{16r} R}^{3k}
= \pr{\frac {16r} R}^{3k} \sup_{B_T \setminus \bigcup 3 D_j} \abs{h},
\end{align*}
which implies \eqref{lowerBound} by our choice of $k$.
\end{proof}

\section{The iterative proposition}
\label{localProof}

Here we present the proposition that is used repeatedly in the iteration argument.
This unique continuation result takes the form of a three-ball inequality for solutions to Schr\"odinger equations.
The techniques used to prove this theorem are very similar to those that appear in the proofs of \cite[Theorem 2.2]{LMNN20} and \cite[Theorem 1.2]{Dav24}.

\begin{prop}[Iterative Proposition]
\label{InductiveProp}
Given $a > 0$ and $R \ge \hat R_0(a)$, let $W : B_R \to \R$ satisfy $\norm{W}_{L^\iny(B_R)} \le a^2$ and let $v : B_R \to \R$ be a solution to 
$$-\LP v + W v = 0 \; \text{ in } B_R$$
with
\begin{equation}
\label{propUB}
\norm{v}_{L^\iny(B_R)} \le e^{c_1 R}
\end{equation}
for some $c_1 \ge 1$.
Assume that for some $S \in \pr{2^8, \frac R 2}$, there exists $z_0 \in \overline{B}_{R-S}$ and $L \ge 0$ such that
\begin{equation}
\label{propLB}
\abs{v(z_0)} \ge e^{- L}.
\end{equation}
Then there exists $r_0(a, R) > 1$ so that whenever $r \in (0, r_0)$, it holds that
\begin{align}
\label{propConc}
\norm{v}_{L^\iny(B_r)}
&\ge \pr{\frac{r} {R}}^{\tau},
\end{align}
with
$$\tau(R, S, L, a, c_1) = \max \set{3 a C_H\sqrt{\frac{C_K }{32\ln 2}} R \sqrt{\log R}, 2^{14} \pr{c_1 R + L + \frac {c_d}{\log R}} RS^{-1}} +\frac 1 5 \pr{L + \frac{c_d}{\log R}},$$
where $C_K$ and $c_d$ are universal constants, and $C_H$ is the Harnack constant from Lemma \ref{discBounds}.
\end{prop}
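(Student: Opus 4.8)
The plan is to deduce Proposition \ref{InductiveProp} from the harmonic three-ball inequality of Proposition \ref{harmonicProp}, following the domain-reduction strategy of \cite{LMNN20} and \cite[Theorem 1.2]{Dav24}. Although $v$ solves a genuine Schr\"odinger equation with a sign-changing potential rather than the Laplace equation, on sufficiently small scales $v$ is comparable to a harmonic function away from its nodal set, while the nodal set itself is controlled by an order-of-vanishing estimate obtained from a Carleman inequality; passing to a genuinely harmonic function in a punctured disk is exactly what Proposition \ref{harmonicProp} is built to exploit.

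\textbf{Rescaling.} An order-of-vanishing bound of Bourgain--Kenig type (cf.\ \cite{BK05}), whose proof rests on a Carleman inequality with a universal constant $C_K$, shows that below a scale of order $\pr{a\sqrt{C_K\log R}}^{-1}$ the nodal set $\set{v = 0}$ is nodally simple inside $B_R$: it can be covered by finitely many $100$-separated disks of that common radius, avoiding a neighbourhood of the origin, on whose surrounding annuli $v$ keeps one sign. Calling this scale $\rho$ and setting $\W v(z) = v(\rho z)$, we obtain a solution of $-\LP\W v + \W W\W v = 0$ on $B_{R/\rho}$ with $\norm{\W W}_{L^\iny(B_{R/\rho})} = \rho^2\norm{W}_{L^\iny(B_R)} \le \rho^2 a^2 \ll 1$, whose nodal set inside $B_{R/\rho}$ is covered by $100$-separated \emph{unit} disks $\set{D_j}$ satisfying $0 \notin \bigcup 3D_j$ and on whose annuli $5D_j\setminus D_j$ the function $\W v$ keeps one sign; the hypotheses \eqref{propUB} and \eqref{propLB} become $\norm{\W v}_{L^\iny(B_{R/\rho})} \le e^{c_1 R}$ and $\abs{\W v(\rho^{-1}z_0)} \ge e^{-L}$ with $\abs{\rho^{-1}z_0} \le \rho^{-1}(R-S)$. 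Since the nodal-simplicity scale depends only on the potential bound, the threshold $\hat R_0$ depends only on $a$; the parameters $c_1$ and $L$ enter the argument only through the constant $M$ below.

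\textbf{Harmonic comparison and the three-ball inequality.} Next I would run the domain-reduction argument of \cite[Theorem 2.2]{LMNN20}: working nodal domain by nodal domain, one produces a function $h$ that is harmonic in $B_{R/\rho}\setminus\bigcup D_j$, keeps one sign on each $5D_j\setminus D_j$, and satisfies $\abs{h}\asymp\abs{\W v}$ on the pertinent subregions, with comparison constants of the form $e^{\pm c_d/\log R}$ accumulated along Harnack chains via Lemma \ref{discBounds}. With $T := R/\rho - \tfrac1{32}(S/\rho)$, the transformed bounds together with this comparison (and, if needed, one further Harnack step to reach $\rho^{-1}z_0$) yield \eqref{normalization} for $h$ with $M$ a universal multiple of $c_1 R + L + c_d/\log R$; the required $M > \log(16R/\rho)$ is automatic since $c_1\ge 1$ and $R\ge\hat R_0(a)$. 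All hypotheses of Proposition \ref{harmonicProp} being in force at scale $\rho$, it gives, for small $r$,
\begin{equation*}
\sup_{B_{r/\rho}\setminus\bigcup 3D_j}\abs{h} \ge \pr{\tfrac{16r}{R}}^{\kappa}\sup_{B_T\setminus\bigcup 3D_j}\abs{h}, \qquad \kappa = \kappa\pr{R/\rho,\ S/\rho,\ M},
\end{equation*}
the base being $16r/R$ because rescaling preserves the ratio.

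\textbf{Assembling the estimate.} Translating back through $\abs{h}\asymp\abs{\W v}$, bounding $\sup_{B_T\setminus\bigcup 3D_j}\abs{h}\gtrsim e^{-L-c_d/\log R}$ via \eqref{propLB}, and absorbing the resulting factors $e^{-L}$ and $e^{\pm c_d/\log R}$ into a power of $r/R$ by means of the elementary bound $\log(R/r)\ge 5$ (valid since $r < r_0$ and $R$ is large), one converts $\kappa\log(16r/R)$ plus the logarithms of those factors into $-\tau\log(R/r)$: the $\max\set{\cdot,\cdot}$ part of $\tau$ is $\kappa\pr{R/\rho,S/\rho,M}$ simplified through the choices of $\rho$ and $M$ (its first entry coming from $6C_H R/\rho$, its second from $2^{13}MRS^{-1}$), and the additive term $\tfrac15\pr{L + c_d/\log R}$ is precisely that absorption, the $\tfrac15$ reflecting $R/r\ge e^5$. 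Undoing the rescaling turns $\sup_{B_{r/\rho}\setminus\bigcup 3D_j}\abs{\W v}$ back into $\norm{v}_{L^\iny(B_r)}$ (again up to an $e^{c_d/\log R}$ Harnack factor absorbed as above) and produces \eqref{propConc}. I expect the main obstacle to be the harmonic comparison step: constructing the disks $\set{D_j}$ with the correct radius, separation, sign-constancy on the annuli, and avoidance of the origin, and then controlling the multiplicative errors between $v$ and $h$ \emph{uniformly}, so that the universal constants $C_K$, $c_d$ and the Harnack constant $C_H$ enter exactly as in the stated formula for $\tau$; the rescaling, the invocation of Proposition \ref{harmonicProp}, and the conversion of multiplicative constants into exponents are then routine bookkeeping.
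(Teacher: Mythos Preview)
There is a genuine gap in your understanding of the domain-reduction mechanism. The nodal set of a real-valued planar solution to $-\LP v + Wv = 0$ is a union of smooth arcs meeting at isolated critical points---a one-dimensional object---so it \emph{cannot} be covered by finitely many well-separated small disks as you claim. No Carleman or order-of-vanishing estimate changes this: those bounds control the rate at which $v$ can vanish at a single point, not the global topology or measure of $\set{v=0}$. Your ``nodally simple'' step, on which the rest of the argument is built, therefore fails outright.

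In the paper (following \cite{LMNN20}) the disks play the opposite role: the family $F_1$ of $\rho$-disks is chosen to be $c_s\rho$-\emph{separated from} the nodal set $F_0$, not covering it, with $F_0\cup F_1\cup\del B_R$ forming a $10c_s\rho$-net in $B_R$. The point is that $\Om = B_R\setminus(F_0\cup F_1)$ then has Poincar\'e constant $\lesssim\rho^2$, so one can solve $\LP\phi - W\phi = 0$ in $\Om$ with $\norm{\phi-1}_\iny\le c_b\eps^2$. The quotient $f=v/\phi$ extends across $F_0$ to a weak solution of $\di(\phi^2\gr f)=0$ in $B_R\setminus F_1$, and the passage to a genuinely harmonic $h=f\circ g^{-1}$ is accomplished by a $K$-quasiconformal homeomorphism $g:B_R\to B_R$ with $K\le 1+C_K\eps^2$; Mori's theorem then controls how $g$ distorts the disks, the point $z_0$, and the small ball $B_r$. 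The constant $C_K$ in the statement is this quasiconformality constant, not a Carleman constant, and the scale $\rho=\eps/a$ with $\eps=\sqrt{\ln 2/(2C_K\log R)}$ is dictated by the requirement $R^K\simeq R$, not by nodal geometry. Your proposal omits the multiplicative-correction and quasiconformal steps entirely; without them there is no harmonic function in a punctured disk to which Proposition~\ref{harmonicProp} can be applied.
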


\begin{proof}
Let $v : B_R \to \R$, $W : B_R \to \R$ and $z_0 \in \overline{B}_{R - S}$ be as given in the statement. 
Let $F_0$ denote the nodal set of $v$, i.e. 
$$F_0 = \set{z \in \R^2 : v(z) = 0}.$$
Set $c_s = 2^{10} 10^2$.
For $\rho > 0$ to be specified below, there exists a set $F_1 \su B_R$ that consists of a collection of $c_s \rho$-separated closed disks of radius $\rho$ that are also $c_s \rho$-separated from $0$, $z_0$, $F_0$, and $\del B_R$.
Moreover, the set $F_0 \bigcup F_1 \bigcup \del B_R$ is a $10 c_s \rho$-net in $B_R$.
A more detailed description of this process is given in \cite[\S 2, Act I]{LMNN20}.

Define $\Om = B_R \setminus \pr{F_0 \bigcup F_1}$ and $\Om_1 = B_R \setminus F_1$.
As shown in \cite[\S 3.1]{LMNN20}, there exists a universal constant $c_P$ (that depends on $c_s$) so that $\Om$ has Poincar\'e constant bounded above by $c_P \rho^2$.
In particular, since $c_P \rho^2 \norm{W}_{L^\iny(B_R)} \le c_P \rho^2 a^2$, then by choosing $\rho \le \rho_0$, a universal constant, we can apply \cite[Lemma 3.2]{LMNN20}.
For $\eps \ll 1$ to be defined later on, let 
\begin{equation}
\label{rhoDef}
\rho = \eps a^{-1}.
\end{equation}
An application of the arguments in \cite[\S 3.2]{LMNN20} then shows that there exists $\phi : \Om \to \R$ with the properties that
\begin{align}
&\LP \phi - W \phi = 0 \text{ in } \Om
\nonumber \\
&\phi -1 \in W^{1,2}_0(\Om)
\nonumber \\
&\norm{\phi -1}_\iny \le c_b \pr{\rho a}^2 = c_b \eps^2,
\label{vpuBound}
\end{align}
where $c_b$ is a universal constant that depends on $c_P$, and we have used \eqref{rhoDef}.
By extending $\phi$ to equal $1$ across $F_0 \bigcup F_1$, it is then shown in \cite[Lemma 4.1]{LMNN20} that $\disp f := \frac v \phi \in W^{1,2}_{\loc}(B_R)$ is a weak solution to the divergence-form equation
$$\di\pr{\phi^2 \gr f} = 0 \; \text{ in } \Om_1.$$
Moreover, the bound in \eqref{vpuBound} implies that for any $z \in B_R$,
\begin{equation}
\label{ufComparison}
\pr{1 - c_b \eps^2}  \abs{f(z)}  \le \abs{v(z)} \le \pr{1 + c_b \eps^2}  \abs{f(z)}.
\end{equation}

We introduce the Beltrami coefficient $\mu$, defined as follows:
\begin{equation*}
\mu = \left\{ \begin{array}{ll}
\frac{1 - \phi^2}{1 + \phi^2} \frac{f_x + i f_y}{f_x - i f_y} & \text{ in } \Om_1 \text{ when } \, \gr f \ne 0 \\
0 & \text{ otherwise}
\end{array} \right..
\end{equation*}
Since $\abs{\mu} \lesssim \eps^2$, then as shown in \cite{AIM09}, there exists a $K$-quasiconformal homeomorphism of the complex plane where $K \le 1 + C_K \eps^2$ and $C_K$ depends on $c_b$.
That is, there exists some  $w \in W^{1,2}_{\loc}$ that satisfies the Beltrami equation $\disp \frac{\del w}{\del \overline{z}} = \mu \frac{\del w}{\del z}$. 
In fact, an application of the Riemann uniformization theorem shows that there exists a $K$-quasiconformal homeomorphism $g: B_R \to B_R$ that is onto with $g(0) = 0$.
Moreover, the function $h : = f \circ g^{-1}$ is harmonic in $g(\Om_1)$.

Mori's Theorem implies that
\begin{equation}
\label{MoriThm}
\frac 1 {16} \abs{\frac{z_1 - z_2}{R}}^K
\le \frac{\abs{g(z_1) - g(z_2)}}{R}
\le 16 \abs{\frac{z_1 - z_2}{R}}^{\frac1 K}.
\end{equation}
Thus, if we set 
\begin{equation}
\label{epsDef}
\eps = \sqrt{\frac{\ln 2}{2 C_K \log R}},
\end{equation}
then $\disp K \in \brac{1, 1 + \frac{\ln 2}{2\log R}}$ and $R \simeq R^K \simeq R^{\frac 1 K}$.
Since the arguments above hold when $\rho = \frac \eps a \le \rho_0$, then we ensure that $R \ge R_1 := \exp\brac{\frac{\ln 2}{2 C_K \pr{a \rho_0}^2 }}$.
If $\abs{z_1 - z_2} = \rho$, \eqref{MoriThm} shows that
\begin{align*}
\abs{g(z_1) - g(z_2)}
\le 16 R \pr{\frac \rho R}^{\frac 1 K}
= 16 \rho \pr{\frac {aR} \eps}^{1 - \frac 1 K}.
\end{align*}
If $R \ge R_2 := \inf\set{R > 0 : \frac{R^2}{\log R}  \ge \frac{2 C_K a^2 }{\ln 2}}$, then $\frac{a}{\eps} \le R$ and
\begin{align*}
\abs{g(z_1) - g(z_2)}
\le 16 \rho \exp\brac{2\log R\pr{1 - \frac 1 K }}
\le 32 \rho.
\end{align*}
On the other hand, if $\abs{z_1 - z_2} = c_s \rho$, \eqref{MoriThm} implies that
\begin{align*}
\abs{g(z_1) - g(z_2)}
\ge \frac R {16}  \pr{\frac {c_s \rho} R}^{K}
\ge c_s \frac \rho {16}  \pr{\frac {\eps} {aR}}^{K- 1}
\ge c_s \frac \rho {16} \exp\brac{-2 \log R\pr{K-1}}
\ge c_s \frac \rho {32}.
\end{align*}
Recalling that $c_s = 2^{10} 10^2$, we may conclude that
$$\abs{g(z_1) - g(z_2)} \ge 3200 \rho.$$
Therefore, $h$ is harmonic in $B_R \setminus \bigcup D_j$, where each $D_j$ is a disk of radius $32\rho$.
Moreover, the disks are $3200 \rho$-separated from each other, $0$, and $g(z_0)$, while $h$ doesn't change sign in any of the annuli $100 D_j \setminus D_j$.
Since $z_0 \in \overline{B}_{R - S} \cap \Om$, then for every $z \in \del B_R$, $\abs{z_0 - z} \ge S \ge 2^8$ and the distortion estimate described by \eqref{MoriThm} shows that
\begin{align*}
\abs{g(z_0) - g(z)} 
&\ge \frac R {16} \abs{\frac{z_0 - z}{R}}^{K}
\ge \frac R {16} \pr{\frac{S}{R}}^{K} 
= \frac S {16} \pr{\frac{2^8}{R}}^{K-1} 
\ge \frac S {16} R^{1-K}
\ge \frac S {16} \exp\pr{- \frac{\ln 2}{2}}
> \frac S {32} .
\end{align*}
It follows that $g(z_0) \in B_{T} \setminus \bigcup 3 D_j $, where $T := R - \frac{S}{32}$.

Since $g : B_R \to B_R$, then we may rescale the map to get 
$$\tilde g := \frac g {32\rho} : B_R \to B_{\frac R {32\rho}}$$ 
which is onto with $\tilde g(0) = 0$.
Using \eqref{rhoDef} and \eqref{epsDef}, set $C_1 =  \frac {a} {32} \sqrt{\frac{2 C_K }{\ln 2}}$ and define
\begin{equation*}
\WT R 
= \frac R {32 \rho} 
= C_1 R \sqrt{\log R}.
\end{equation*}
From here, we see that $\tilde h := f\circ \tilde g^{-1}$ is harmonic in $\tilde g\pr{\Om_1}$.
In particular, $\tilde h$ is harmonic in $B_{\WT R} \setminus \bigcup \WT D_j$, where the $\WT D_j$ are unit disks that are 100-separated from each other, from $0$, and from $\tilde g(z_0)$.
Moreover, $\tilde h$ doesn't change signs on any annuli $5 \WT D_j \setminus \WT D_j$.
With $\WT S := C_1 S \sqrt{\log R}$ and $\WT T := \WT R - \frac{\WT S}{32} = C_1 T \sqrt{\log R}$, $\tilde g(z_0) \in B_{\WT T} \setminus \bigcup 3 \WT D_j$.
Set $\hat R_0 = \max \set{R_1, R_2, 2^{10},  \exp\pr{C_1^{-2}}}$ so the above arguments hold.
Since $R \ge \hat R_0$ implies that $C_1 \sqrt{\log R} \ge 1$ and $R \ge 2^{10}$, then $\WT R = C_1 R \sqrt{\log R} \ge 2^{10}$ as well.
Because $S \in \pr{2^8, \frac R 2}$, then $\WT S \in \pr{2^8, \frac{\WT R}{2}}$.

As $z_0$ satisfies the bound in \eqref{propLB}, then \eqref{ufComparison} shows that
\begin{align*}
\pr{1 + c_b \eps^2} \abs{f(z_0)}
\ge \abs{v(z_0)}
\ge \exp\pr{- L}.
\end{align*}
Since $\tilde g(z_0) \in B_{\WT T} \setminus \bigcup 3 \WT D_j$, then
\begin{align}
\label{midBallBound}
\sup_{B_{\WT T} \setminus \bigcup 3 \WT D_j} \abs{\tilde h}
&\ge \abs{\tilde h \circ \tilde g(z_0)}
= \abs{f(z_0)}
\ge \frac {e^{- L}}{1 + c_b \eps^2}.
\end{align}
An application of \eqref{propUB} shows that
\begin{align}
\label{bigBallBound}
\sup_{B_{\WT R} \setminus \bigcup 3 \WT D_j} \abs{\tilde h}
&\le \sup_{B_{\WT R}} \abs{\tilde h}
= \sup_{\tilde g\pr{B_R}} \abs{f \circ \tilde g^{-1}}
= \sup_{B_R} \abs{f}
\le \frac{1}{1 - c_b \eps^2} \sup_{B_R} \abs{v} 
\le \frac{e^{c_1 R}}{1 - c_b \eps^2} .
\end{align}
Combining \eqref{midBallBound} and \eqref{bigBallBound} then shows that
\begin{align*}
\sup_{B_{\WT T} \setminus \bigcup 3 \WT D_j} \abs{\tilde h}
&\ge \frac {1 - c_b \eps^2}{1 + c_b \eps^2} e^{- \pr{c_1 R + L}} \frac{e^{c_1 R}}{1 - c_b \eps^2}
\ge e^{- M}  \sup_{B_{\WT R} \setminus \bigcup 3 \WT D_j} \abs{\tilde h},
\end{align*}
where we introduce $M := c_1 R + L + \frac {c_d}{\log R}$ and $c_d$ depends on $c_b$ and $C_K$, see \eqref{epsDef}.
Because $R \ge R_2$ implies that $16 \WT R = 16 C_1 R \sqrt{\log R} < R^2$, then because $c_1 \ge 1$, $M > R \ge 2^{10}$, and we have $M > \log(16\WT R)$.
Therefore, all of the hypotheses of Proposition \ref{harmonicProp} hold hold with $h$, $\set{D_j}$, $R$, $S$, and $T$ replaced by $\tilde h$, $\set{\WT D_j}$, $\WT R$, $\WT S$, and $\WT T$, respectively. 

For $r \ll R$, an application of \eqref{MoriThm} shows that $g\pr{B_r}$ contains a disk of radius $r_1$, where
$$r_1 \ge \frac R {16} \pr{\frac r R}^K \ge  \frac R {16} \pr{\frac r R}^2.$$
It follows that $\tilde g\pr{B_r} \supset B_{\tilde r}$, where 
\begin{equation}
\label{scaleBounds}
\frac{16 \tilde r}{\WT R} = \pr{\frac r R}^2.
\end{equation}
If $r \le \sqrt{\frac{98 \cdot 16 R}{C_1 \sqrt{\log R}}}$, then $\tilde r \le 98$, so that $B_{\tilde r} \setminus \bigcup 3 \WT D_j = B_{\tilde r}$, and then
\begin{align}
\label{smallBallBound}
\sup_{B_{\tilde r} \setminus \bigcup 3 \WT D_j} \abs{\tilde h}
&= \sup_{B_{\tilde r}} \abs{\tilde h}
\le \sup_{\tilde g\pr{B_{r}}} \abs{f \circ \tilde{g}^{-1}}
= \sup_{B_{r}} \abs{f}.
\end{align}
Since $r \in \pr{0, \frac R {2^3}}$ implies that $\tilde r \in \pr{0, \frac{\WT R}{2^{10}}}$, then in this case we may apply Proposition \ref{harmonicProp} with $r$ replaced by $\tilde r$.
In particular, the conclusion \eqref{lowerBound} from Proposition \ref{harmonicProp} shows that
\begin{align}
\label{propApp}
\sup_{B_{\tilde r} \setminus \bigcup 3 \WT D_j} \abs{\tilde h}
\ge \pr{\frac{16 {\tilde r}} {\WT R}}^{\tilde \kappa} \sup_{B_{\WT T} \setminus \bigcup 3 \WT D_j} \abs{\tilde h},
\end{align}
where
\begin{align*}
\tilde \kappa
:= \kappa\pr{\WT R, \WT S, M}
= \max \set{6C_H C_1 R \sqrt{\log R}, 2^{13} \pr{c_1 R + L + \frac {c_d}{\log R}} RS^{-1}}.
\end{align*}
Since $R \ge 2^{10}$ implies $\frac R 8 > 1$, while $R \ge R_2$ implies $\sqrt{\frac{98 \cdot 16 R}{C_1 \sqrt{\log R}}} > 1$, then $r_0 := \min\set{\frac R 8, \sqrt{\frac{98 \cdot 16 R}{C_1 \sqrt{\log R}}}} > 1$.
Assume that $R \ge \hat R_0$ and $r < r_0$.
Combining \eqref{ufComparison}, \eqref{smallBallBound}, \eqref{propApp}, \eqref{scaleBounds}, and \eqref{midBallBound} shows that
\begin{align*}
\frac 1{1 - c_b \eps^2} \sup_{B_r} \abs{v}
&\ge \sup_{B_r} \abs{f}
\ge \sup_{B_{\tilde r} \setminus \bigcup 3 \WT D_j} \abs{\tilde h}
\ge \pr{\frac{16 {\tilde r}} {\WT R}}^{\tilde \kappa} \sup_{B_{\WT T} \setminus \bigcup 3 \WT D_j} \abs{\tilde h}
\ge \pr{\frac{r} {R}}^{2\tilde \kappa} \frac {e^{- L}} {1 + c_b \eps^2} .
\end{align*}
Since $C_1 =  \frac {a} {32} \sqrt{\frac{2 C_K }{\ln 2}}$, $\disp \frac{1 - c_b \eps^2}{1 + c_b \eps^2} \ge e^{- \frac{c_d}{\log R}}$, and $e < 4 \le \pr{2^{10}}^{\frac 1 5} \le \pr{\frac{R}{r}}^{\frac 1 5}$, then \eqref{propConc} follows, as required. 
\end{proof}

\section{Transformation maps}
\label{TransMaps}

In this section, we introduce the transformation maps that serve as real-valued versions of the conformal maps $z \mapsto z^\al$.
Once the maps are defined, we show how they transform solutions to elliptic PDEs and how they transform balls.
These results will be used at the beginning and at the end of the proof of Theorem \ref{mainThm}.

Given $z = (x,y) \in \R^2$ in Cartesian coordinates, the polar coordinates for $z$ are $(r, \te) \in \R_+ \times (-\pi, \pi]$, where $r = \sqrt{x^2 + y^2}$ and $\te = \sgn(y) \arccos\pr{\frac x r}$.
Let $\R^2_+ = \set{z = (r,\te) : r > 0, \te \in (- \frac \pi 2, \frac \pi 2)}$ denote the right half-plane.
For $\al \in (1, \iny)$, we define $T_\al : \R^2 \to \R^2$ to be the map associated to the conformal transformation on $\C$ given by $z \mapsto z^\al$. 
In polar coordinates, $T_\al$ is described as
\begin{equation}
\label{TalDefn}
T_\al(r, \te) = (r^\al, \al \te).
\end{equation}
To avoid continuity issues at $\te = \pi$, we restrict the domain and only consider $T_\al : \R^2_+ \to \R^2$.

\begin{lem}[Transformation of PDEs]
\label{transformEqLemma}
If $u : \R^2 \to \R$ is a solution to \eqref{ellipEq}, then with $v : \R^2_+ \to \R$ and $W : \R^2_+ \to \R$ defined by $v(z) = u(T_\al(z))$ and $W(z) = \al^2 \abs{z}^{2\al - 2} V(T_\al(z))$, it holds that
$$-\LP v + W v = 0 \; \text{ in } \R^2_+.$$
\end{lem}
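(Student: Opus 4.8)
The plan is to compute directly how the Laplacian transforms under the conformal map $z \mapsto z^\al$, using the fact that this map is holomorphic (hence conformal) away from the origin. The key algebraic fact is that if $\Phi : \C \to \C$ is holomorphic and $v = u \circ \Phi$ as a function of the real variables, then $\LP v = |\Phi'|^2 \, (\LP u)\circ \Phi$; this is the standard conformal covariance of the Laplacian in two dimensions. Here $\Phi(z) = z^\al$ (interpreted on $\R^2_+$ so that the branch of $z^\al$ is well-defined and $\Phi$ is a genuine holomorphic function on a simply connected domain avoiding the origin), so $\Phi'(z) = \al z^{\al-1}$ and $|\Phi'(z)|^2 = \al^2 |z|^{2\al-2}$.

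Carrying this out: first I would record that $T_\al$ as defined in \eqref{TalDefn} is precisely the real-variable incarnation of $z \mapsto z^\al$ on $\R^2_+$, so that $T_\al$ is smooth and its (real) Jacobian is conformal with conformal factor $|\Phi'(z)| = \al |z|^{\al-1}$. Then, for $v(z) = u(T_\al(z))$, the chain rule gives $\gr v(z) = (D T_\al(z))^{\mathsf T} \, \gr u(T_\al(z))$, and since $D T_\al(z) = \al|z|^{\al-1} \mathcal{O}(z)$ for an orthogonal matrix $\mathcal O(z)$, computing $\LP v$ and using that $u$ is $C^2$ (elliptic regularity for \eqref{ellipEq}) yields
\begin{equation*}
\LP v(z) = \al^2 |z|^{2\al - 2} \, (\LP u)(T_\al(z)).
\end{equation*}
Alternatively, and perhaps more cleanly, I would verify the identity $\LP(u\circ\Phi) = |\Phi'|^2 (\LP u)\circ\Phi$ by writing $\LP = 4 \partial_z \partial_{\bar z}$ and using $\partial_z(u\circ\Phi) = (\partial_z u)\circ\Phi \cdot \Phi'$, $\partial_{\bar z}(u\circ\Phi) = (\partial_{\bar z} u)\circ \Phi \cdot \overline{\Phi'}$ (the latter because $\Phi$ is holomorphic, so $\partial_{\bar z}\Phi = 0$), and iterating.

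Once this identity is in hand, the conclusion is immediate: since $-\LP u + Vu = 0$ on $\R^2$, we have $(\LP u)(T_\al(z)) = V(T_\al(z)) \, u(T_\al(z)) = V(T_\al(z)) \, v(z)$, so
\begin{equation*}
\LP v(z) = \al^2 |z|^{2\al-2} V(T_\al(z)) \, v(z) = W(z)\, v(z),
\end{equation*}
which is exactly $-\LP v + Wv = 0$ on $\R^2_+$, with $W$ as defined in the statement. I do not expect a genuine obstacle here; the main thing to be careful about is domain issues — the map $T_\al$ is only a well-defined smooth map on $\R^2_+$ (not across the negative real axis, and the origin is a singular point of $z\mapsto z^\al$ when $\al\notin\N$), so the identity is asserted only on $\R^2_+$, and one should note that $0\notin\R^2_+$ so the factor $|z|^{2\al-2}$ causes no trouble. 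The only mildly technical point is justifying the pointwise chain-rule manipulations, which is routine given that $u\in C^\iny$ away from nothing (indeed $u$ is real-analytic where $V$ is, but $C^2$ suffices and follows from standard elliptic estimates since $V\in L^\iny_{\loc}$), and $T_\al$ is smooth on $\R^2_+$.
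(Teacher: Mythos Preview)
Your proof is correct and takes essentially the same approach as the paper: both verify directly that $\LP v = \al^2 |z|^{2\al-2} (\LP u)\circ T_\al$ and then substitute \eqref{ellipEq}. The only difference is cosmetic --- the paper carries out the computation in polar coordinates using $\LP = \del_r^2 + r^{-1}\del_r + r^{-2}\del_\te^2$, whereas you invoke the conformal covariance identity $\LP(u\circ\Phi) = |\Phi'|^2 (\LP u)\circ\Phi$ via Wirtinger derivatives.
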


\begin{proof}
Let $w := T_\al(z)$ be given in polar coordinates by $\pr{\rho, \vp}$.
Since $u$ is defined on $\R^2$, then in polar coordinates, $u(\rho, \vp)$ can be defined for all $\rho \in \R^+$ and all $\vp \in \R$ using periodicity.
We then see that $v(z) = v(r, \te) = u(r^\al, \al \te) = u(T_\al(z))$ is well-defined on $\R^2_+$. 
Since
\begin{align*}
\del_r v(r, \te) &= \al r^{\al - 1} \del_\rho u(\rho, \vp) \\
\del_{r}^2 v(r, \te) &= \al^2 r^{2\al - 2} \del_{\rho}^2 u(\rho, \vp) + \al \pr{\al - 1} r^{\al - 2} \del_\rho u(\rho, \vp) \\
\del_{\te}^2 v(r, \te) &= \al^2 \del_{\vp}^2 u(\rho, \vp),
\end{align*}
then
\begin{align*}
\LP v(z)
&= \del_{r}^2 v(r, \te) + \frac 1 r \del_{r} v(r, \te) + \frac 1 {r^2} \del_{\te}^2 v(r, \te) \\
&= \al^2 r^{2\al - 2} \del_{\rho}^2 u(\rho, \vp) + \al \pr{\al - 1} r^{\al - 2} \del_\rho u(\rho, \vp)
+ \al r^{\al - 2} \del_\rho u(\rho, \vp)
+ r^{-2} \al^2 \del_{\vp}^2 u(\rho, \vp) \\
&= \al^2 r^{2\al - 2} \brac{\del_\rho^2 u(\rho, \vp) + \rho^{-1} \del_\rho u(\rho, \vp) + \rho^{-2}\del_{\vp}^2 u(\rho, \vp)} \\
&= \al^2 r^{2\al - 2} \LP u(w) 
=  \al^2 r^{2\al - 2} V(w) u(w),
\end{align*}
where we have used \eqref{ellipEq}.
With $W(z) = \al^2 \abs{z}^{2\al - 2} V(T_\al(z)) = \al^2 r^{2\al - 2} V(w)$ as given, the conclusion follows.
\end{proof}

\begin{lem}[Ball containment]
\label{ballContainLemma}
Let $z_0 = (r_0, 0)$ and set $\disp \tilde r = \frac{r_0^{1 - \al}}{2 \sqrt 3 \al}$.
There exists $r_\al > 0$ so that whenever  with $r_0 \ge r_\al$, it holds that $T_\al(B_{\tilde r}(z_0)) \su B_1(T_\al(z_0))$ and $B_1(T_\al(z_0)) \su T_\al\pr{B_1(z_0)}$.
\end{lem}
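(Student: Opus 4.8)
The plan is to treat $T_\al$ and its local inverse as the holomorphic maps $z\mapsto z^\al$ and $w\mapsto w^{1/\al}$ (principal branches) and to bound the sizes of the images of the two balls by integrating the relevant derivative along a straight segment. Throughout, $\al>1$ is a fixed constant, and $r_\al$ will be taken at the end to be the largest of the finitely many thresholds produced along the way; in particular we always assume $r_0$ is large enough that $\tilde r=\frac{r_0^{1-\al}}{2\sqrt3\,\al}<1<\tfrac12 r_0$, which forces $B_{\tilde r}(z_0)\su B_1(z_0)\su\R^2_+$. Note that $\al\, r_0^{\al-1}\tilde r=\frac1{2\sqrt3}$, so the normalization of $\tilde r$ is exactly what makes the leading term of the image radius equal to $\frac1{2\sqrt3}<\tfrac12$, leaving room for the error.

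First I would establish $T_\al(B_{\tilde r}(z_0))\su B_1(T_\al(z_0))$. For $z\in B_{\tilde r}(z_0)$ set $\zeta=z-z_0$, so $|\zeta|<\tilde r$ and the segment $[z_0,z]$ stays in the right half-plane; then
$$T_\al(z)-T_\al(z_0)=\int_0^1 \al\,(z_0+t\zeta)^{\al-1}\,\zeta\,dt,$$
and since $\al-1>0$ we have $|(z_0+t\zeta)^{\al-1}|=|z_0+t\zeta|^{\al-1}\le(r_0+\tilde r)^{\al-1}$, whence
$$|T_\al(z)-T_\al(z_0)|\le \al\,\tilde r\,(r_0+\tilde r)^{\al-1}=\frac1{2\sqrt3}\pr{1+\tfrac{\tilde r}{r_0}}^{\al-1}.$$
Because $\tilde r/r_0\to 0$ as $r_0\to\infty$, the last factor is at most $\sqrt3$ once $r_0$ is large, giving $|T_\al(z)-T_\al(z_0)|\le\tfrac12<1$.

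Next I would establish $B_1(T_\al(z_0))\su T_\al(B_1(z_0))$. Write $w_0=T_\al(z_0)=r_0^\al$, a large positive real, so $w_0^{1/\al}=z_0$. Given $w\in B_1(w_0)$, we have $\Re w>0$, hence $z:=w^{1/\al}$ (principal branch) lies in $\R^2_+$ and satisfies $T_\al(z)=w$; it remains to check $z\in B_1(z_0)$. Integrating the derivative of $w\mapsto w^{1/\al}$ along $[w_0,w]\su\{\Re w>0\}$,
$$z-z_0=\int_0^1\tfrac1\al\pr{w_0+t(w-w_0)}^{\frac1\al-1}(w-w_0)\,dt,$$
and since $\tfrac1\al-1<0$ while $|w_0+t(w-w_0)|\ge r_0^\al-1\ge\tfrac12 r_0^\al$ for $r_0$ large, we obtain
$$|z-z_0|\le\tfrac1\al\,2^{1-\frac1\al}\,r_0^{1-\al},$$
which tends to $0$ as $r_0\to\infty$ because $\al>1$; so $|z-z_0|<1$ once $r_0\ge r_\al$, as needed. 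Collecting the finitely many lower bounds on $r_0$ into a single $r_\al$ then finishes the argument.

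The one delicate point is branch bookkeeping: one must verify that $B_{\tilde r}(z_0)$ and $B_1(z_0)$ in the $z$-plane and $B_1(w_0)$ in the $w$-plane all lie in the open right half-plane, away from $0$, so that $z^\al$, $z^{\al-1}$, $w^{1/\al}$, and $w^{1/\al-1}$ are single-valued holomorphic there, that $|z^\beta|=|z|^\beta$ for the real exponents $\beta$ involved, and that $w^{1/\al}$ is a genuine $T_\al$-preimage lying in $\R^2_+$. All of this is immediate from $\al>1$ and $r_0\to\infty$, which force $\tilde r\to 0$ and $1/r_0\to 0$, so I expect this bookkeeping — rather than the estimates themselves — to be the main, and quite minor, obstacle.
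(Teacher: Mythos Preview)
Your argument is correct and takes a genuinely different route from the paper. The paper works entirely in polar coordinates: it encloses $B_{\tilde r}(z_0)$ in a polar rectangle $\set{r\in[r_0-\tilde r,r_0+\tilde r],\,\abs{\te}\le\frac{2\tilde r}{\sqrt3 r_0}}$, pushes this forward via $(r,\te)\mapsto(r^\al,\al\te)$, and then Taylor-expands $(r_0\pm\tilde r)^\al$ to check the image sits inside a polar rectangle contained in $B_1(w_0)$; the second containment is handled symmetrically by pulling $B_1(w_0)$ back and expanding $(r_0^\al\pm1)^{1/\al}$. You instead exploit the holomorphicity of $z\mapsto z^\al$ and $w\mapsto w^{1/\al}$ on the right half-plane and control the image diameter by integrating the derivative along a segment, which yields the same bounds more directly and without separating radial and angular variables. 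Your approach is shorter and makes the role of the constant $\frac{1}{2\sqrt3}$ transparent as the leading image radius; the paper's approach has the virtue of staying in real coordinates and making explicit the intermediate polar rectangles, which matches how $T_\al$ is introduced there as a real map. Your branch bookkeeping is fine: since $z_0$ and $w_0$ lie on the positive real axis and the balls in question have radii that are small relative to $r_0$ and $r_0^\al$, all relevant arguments are close to $0$, so the principal branches of $z^\al$ and $w^{1/\al}$ agree with $T_\al$ and its local inverse on these sets.
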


\begin{proof}
With $\tilde r$ as given, it can be shown that
\begin{equation}
\label{ballIn} 
\begin{aligned}
B_{\tilde r}(z_0) 
&\su \set{r \in [r_0 - \tilde r, r_0 + \tilde r], \abs{\te} \le \frac {2\tilde r} {\sqrt 3 r_0}} \\
&= \set{r \in \brac{r_0\pr{1 -  \frac{r_0^{- \al}}{2 \sqrt 3 \al}}, r_0\pr{1 +  \frac{r_0^{- \al}}{2 \sqrt 3 \al}}}, \abs{\te} \le \frac {1} {3 \al r_0^\al}}.
\end{aligned}
\end{equation}
With $w_0 = T_\al(z_0)= \pr{r_0^\al, 0} = \pr{\rho_0, 0}$, it follows from \eqref{ballIn} that
\begin{align*}
T_\al(B_{\tilde r}(z_0)) 
&\su \set{r^\al \in \brac{r_0^\al\pr{1 -  \frac{r_0^{- \al}}{2 \sqrt 3 \al}}^\al, r_0^\al \pr{1 +  \frac{r_0^{- \al}}{2 \sqrt 3 \al}}^\al}, \abs{\al \te} \le \frac {\al} {3 \al r_0^\al}} \\
&= \set{\rho \in \brac{\rho_0 \pr{1 - \frac{\rho_0^{- 1}}{2 \sqrt 3 \al}}^\al, \rho_0 \pr{1 + \frac{\rho_0^{- 1}}{2 \sqrt 3 \al} }^\al}, \abs{\vp} \le \frac {1} {3 \rho_0} }.
\end{align*}
Since
\begin{align*}
\rho_0 \pr{1 \pm \frac{\rho_0^{- 1}}{2 \sqrt 3 \al} }^\al
&= \rho_0 \pm \frac{1}{2 \sqrt 3} + \frac{(1 - \frac 1 \al)}{24} r_0^{- \al} + \ldots 
\end{align*}
then there exists $r_1(\al) \gg 1$ so that whenever $r_0 \ge r_1$, we have $\rho_0 - \frac 1 2 \le \rho_0 \pr{1 - \frac{\rho_0^{- 1}}{2 \sqrt 3 \al} }^\al$ and $\rho_0 \pr{1 + \frac{\rho_0^{- 1}}{2 \sqrt 3 \al} }^\al \le \rho_0 +\frac 1 2$.
Because,
\begin{align}
&B_1(w_0)  \supset \set{\rho \in \brac{\rho_0 - \frac 1 2, \rho_0 + \frac 1 2}, \abs{\vp} \le \frac 1 {3\rho_0} },
\label{ballOf}
\end{align}
then it follows that $T_\al(B_{\tilde r}(z_0))  \su B_1(w_0)$.

On the other hand, with $T_\al^{-1}(B_1(w_0)) = \set{z : T_\al(z) \in B_1(w_0)}$, \eqref{ballIn} (with $\tilde r$ replaced by $1$) implies that 
\begin{align*}
T_\al^{-1}(B_1(w_0))
&\su \set{r^\al \in [\rho_0 - 1, \rho_0 + 1], \abs{\al \te} \le \frac {2} {\sqrt 3 \rho_0}} \\
&= \set{r \in \brac{\pr{r_0^\al - 1}^{\frac 1 \al}, \pr{r_0^\al + 1}^{\frac 1 \al}}, \abs{\te} \le \frac {2} {\sqrt 3 \al r_0^\al}}.
\end{align*}
Since
\begin{align*}
\pr{r_0^\al \pm 1}^{\frac 1 \al}
&= r_0 \pr{1 \pm \frac 1 {r_0^\al}}^{\frac 1 \al}
= r_0 \pm \frac 1 {\al r_0^{\al-1}} - \frac{1 - \frac 1 \al}{2 \al r_0^{2\al-1}}  + \ldots 
\end{align*}
then there exist $r_2(\al) \gg 1$ so that whenever $r_0 \ge r_2$, we have $\pr{r_0^\al + 1}^{\frac 1 \al} \le r_0 + \frac 1 2$ and $\pr{r_0^\al - 1}^{\frac 1 \al} \ge r_0 - \frac 1 2$.
If $r_0 \ge r_3 := \pr{\frac {2 \sqrt 3} { \al }}^{\frac 1 {\al - 1}}$, then
\begin{align*}
T_\al^{-1}(B_1(w_0))
&\su \set{r \in \brac{r_0 - \frac 1 2, r_0 + \frac 1 2}, \abs{\te} \le \frac {1} {3 r_0}}
\su B_1(z_0),
\end{align*}
where we have used \eqref{ballOf}.
It follows that $B_1(w_0) \su T^\al(B_1(z_0))$.
To complete the proof, choose $r_\al = \max\set{r_1, r_2, r_3}$.
\end{proof}

\section{Proof of Theorem \ref{mainThm}}
\label{MainProof}

In this section, we prove the main result, Theorem \ref{mainThm}.
Before proceeding to the rigorous details, we describe the main steps:
\begin{itemize}
\item Initialize the iterative argument by applying Theorem \ref{LandisGrowth} to the solution function $u$.
\item Compose the solution with the transformation map given in \eqref{TalDefn} to get a solution $v$ to a different Schr\"odinger equation, see Lemma \ref{transformEqLemma}.
\item Apply the iterative result, Proposition \ref{InductiveProp}, to $v$ many times to reduce the exponent.
\item Undo the change of variables to get the desired estimate for $u$.
\end{itemize}
We recall the following result originally proved in \cite{LMNN20}, see also \cite[Theorem 1.1]{Dav24} in the case where $N = 0$ for this formulation.
This estimate serves as the initialization step in our iteration argument.

\begin{thm}[Initialization step]
\label{LandisGrowth}
For some $a_0 > 0$, let $V : \R^2 \to \R$ satisfy $\norm{V}_{L^\iny} \le a_0^2$.
Assume that $u : \R^2 \to \R$ is a solution to \eqref{ellipEq} that satisfies \eqref{solNorm} and for each $w \in \R^2$,
$$\abs{u(w)} \le \exp\pr{c_0 \abs{w}}.$$
Then there exists constants $\overline{C}_0 = \overline{C}_0\pr{a_0, c_0} > 0$ and $\overline{R}_0 > 0$ so that whenever $R \ge \overline{R}_0$, it holds that
\begin{equation}
\label{uLower0}
\inf_{\abs{w_0} = R}\norm{u}_{L^\iny\pr{B_1(w_0)}} \ge \exp\pr{- \overline{C}_0 R \log^{\frac 3 2} R}.
\end{equation}
\end{thm}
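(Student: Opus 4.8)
The plan is to derive Theorem~\ref{LandisGrowth} directly from the iterative three-ball inequality, Proposition~\ref{InductiveProp}, applied a single time on a large ball centered at the far point $w_0$. This is in the spirit of \cite{LMNN20}: the substantive content is the three-ball inequality itself, and once Proposition~\ref{InductiveProp} is in hand, the rest is parameter bookkeeping together with an extraction of the sharp power of $\log R$.

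First I would fix $w_0 \in \R^2$ with $\abs{w_0} = R$ for $R$ large (to be quantified) and translate: since $-\LP u + Vu = 0$ is translation invariant, $\tilde v(z) := u(z + w_0)$ solves $-\LP \tilde v + \tilde W \tilde v = 0$ in $B_\rho$ with $\tilde W(z) := V(z + w_0)$, $\norm{\tilde W}_{L^\iny(B_\rho)} \le a_0^2$, and $\rho := 4R$. The growth hypothesis $\abs{u(w)} \le \exp(c_0\abs{w})$ gives $\norm{\tilde v}_{L^\iny(B_\rho)} \le \exp\pr{c_0(R + \rho)} = \exp(5 c_0 R) \le \exp(c_1 \rho)$ with $c_1 := \max\set{\tfrac 54 c_0, 1} \ge 1$. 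Finally $\abs{\tilde v(-w_0)} = \abs{u(0)} = 1$, so \eqref{propLB} holds at $z_0 := -w_0$ with $L = 0$, and for the choice $S := R$ one has $z_0 \in \overline B_R \su \overline B_{\rho - S}$ (indeed $\rho - S = 3R$), with $S = R \in \pr{2^8, \tfrac \rho 2}$ as soon as $R > 2^8$.

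Thus all hypotheses of Proposition~\ref{InductiveProp} are met with radius $\rho$, separation parameter $S = R$, $a = a_0$, $c_1$ as above and $L = 0$, once $R$ is large enough that $\rho = 4R \ge \hat R_0(a_0)$. Taking $r = 1$ (permissible since $r_0(a_0, \rho) > 1$) in \eqref{propConc} would then yield
\begin{equation*}
\norm{u}_{L^\iny(B_1(w_0))} = \norm{\tilde v}_{L^\iny(B_1)} \ge \pr{\tfrac 1 \rho}^{\tau} = \exp\pr{-\tau \log(4R)},
\end{equation*}
where substituting $R \mapsto \rho = 4R$, $S \mapsto R$ and $L = 0$ into the formula for $\tau$ gives
\begin{equation*}
\tau = \max\set{12\, a_0 C_H \sqrt{\tfrac{C_K}{32 \ln 2}}\, R \sqrt{\log(4R)},\ \ 2^{16}\pr{4 c_1 R + \tfrac{c_d}{\log(4R)}}} + \tfrac 15 \cdot \tfrac{c_d}{\log(4R)}.
\end{equation*}
For $R$ large the first term in the maximum dominates (it is of size $R\sqrt{\log R}$, the second only of size $R$), so $\tau \le C_\ast R \sqrt{\log R}$ with $C_\ast$ depending only on $a_0, c_0$; hence $\norm{u}_{L^\iny(B_1(w_0))} \ge \exp\pr{-C_\ast R \sqrt{\log R}\,\log(4R)} \ge \exp\pr{-\overline C_0 R \log^{\frac 3 2} R}$ after choosing $\overline C_0(a_0,c_0)$ and enlarging $\overline R_0$. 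Since none of the constants depend on $w_0$ beyond $\abs{w_0} = R$, I would then pass to the infimum over $\abs{w_0} = R$ to obtain \eqref{uLower0}.

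The one point requiring care — beyond invoking Proposition~\ref{InductiveProp} — is checking that the parameter constraints ($S \in (2^8, \tfrac\rho 2)$, $z_0 \in \overline B_{\rho - S}$, $\rho \ge \hat R_0(a_0)$, $r_0 > 1$, $c_1 \ge 1$, and the upper bound $e^{c_1\rho}$) are simultaneously met for all large $R$, and then tracking exactly which term of the maximum defining $\tau$ dominates; this is what pins the rate at $R\log^{\frac 3 2}R$ rather than something larger. A proof not quoting Proposition~\ref{InductiveProp} would instead re-run the \cite{LMNN20} argument (nodal-domain reduction, quasiconformal straightening, the harmonic-function estimates of Section~\ref{harmonic}) — which is essentially the proof of Proposition~\ref{InductiveProp} itself — so the reduction above is the efficient route.
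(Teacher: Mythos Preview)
Your argument is correct: a single application of Proposition~\ref{InductiveProp} with $\rho = 4R$, $S = R$, $L = 0$, $z_0 = -w_0$, and $r = 1$ yields exactly the $R\log^{3/2}R$ rate, and the parameter checks you list are all accurate. There is no circularity, since the proof of Proposition~\ref{InductiveProp} in Section~\ref{localProof} nowhere invokes Theorem~\ref{LandisGrowth}.

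The paper itself does not prove Theorem~\ref{LandisGrowth}; it simply quotes the result from \cite{LMNN20} (in the formulation of \cite[Theorem~1.1]{Dav24} with $N = 0$). Your route is therefore genuinely different and, in a sense, more economical: since Proposition~\ref{InductiveProp} is already a packaging of the full \cite{LMNN20} machinery (nodal-set reduction, quasiconformal straightening, the punctured-domain harmonic estimate of Proposition~\ref{harmonicProp}), specializing it once recovers the Landis-type bound without an external reference, making the paper self-contained for this step. What the paper's citation buys is historical clarity --- the $N=0$ result predates and motivates Proposition~\ref{InductiveProp} --- whereas your derivation makes transparent that Theorem~\ref{LandisGrowth} is really the degenerate case $\al_k = 1$, $L = 0$ of the iteration, with the $\log^{3/2}R$ coming precisely from the $R\sqrt{\log R}$ branch of $\tau$ multiplied by the outer $\log R$.
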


Observe that if $u$ satisfies the hypothesis of Theorem \ref{mainThm}, then it also satisfies the hypotheses of Theorem \ref{LandisGrowth}.
We now have all of the tools we need to prove Theorem \ref{mainThm}.

\begin{proof}[Proof of Theorem \ref{mainThm}]

With $N \in (0, 2)$ as given, let $\al =\pr{1 - \frac N 2}^{-1} = \frac 2 {2-N} \in (1, \iny)$.
Set $\de = \min\set{\frac{\eps}{2 - N}, 1}$.
With $r_\al$ from Lemma \ref{ballContainLemma}, $\overline{C}_0(a_0, c_0)$ and $\overline{R}_0$ from Theorem \ref{LandisGrowth}, and $\hat R_0(a)$ from Proposition \ref{InductiveProp}, define $\bar{r}_1 \ge \max\set{r_\al, \overline{R}_0^{\frac 1 \al}, \hat R_0\pr{a_0 \al}}$ as small as possible so that each of the following conditions hold:
\begin{align}
& r^{\de} \ge \overline{C}_0 \al^{\frac 3 2} \log^{\frac 3 2} r
\label{rBig1} \\
&r \log r \ge c_d 
\label{rBig2} \\
& r^{\frac \de {1 + \de}} \ge \frac{3 a_0 \al C_H}{2^{14} \pr{5c_0 + 4} } \sqrt{\frac{C_K }{32\ln 2}} \sqrt{\log r}
\label{rBig3} \\
& r^{\frac {\de^2} 2 } \ge \brac{2^{14} \pr{5c_0 + 4} + \frac 2 5 }\log r
\label{rBig4} \\
& r^{\frac {5\de^2} 6 } \ge \brac{2^{14} \pr{5c_0 + 4} + \frac 2 5}\log \brac{\frac{4\al}{ \sqrt 3}  \pr{\frac 3 2 r}^{\al} }
\label{rBig5}.
\end{align}
Here $C_K$ and $c_d$ are universal constants from Proposition \ref{InductiveProp} and $C_H$ is the universal Harnack constant from Lemma \ref{discBounds}.

For each $k \in \N$, define 
\begin{equation*}
\al_k = \frac{2 + (k-1)N}{2 + (k-2) N} > 1.
\end{equation*}
Observe that $\al_1 = \al$, $\al_{k+1} = 2 - \frac 1 {\al_k} < \al_k$, and $\disp \lim_{k \to \iny} \al_k = 1$.
Choose $\ell \in \N$ so that $\frac{N}{2 + \pr{\ell - 2}N} \le \de < \frac{N}{2 + \pr{\ell - 3}N}$ or $\al_\ell \le 1 + \de < \al_{\ell - 1}$.
For each $k \in \set{1, 2, \ldots, \ell - 1}$, recursively define $\bar{r}_{k+1} = \bar{r}_k^{\al_k + \de} + \frac 1 2 \bar{r}_k - 1$, where $\bar r_1$ is from above.
Take $w_0 \in \R^2$ with $\abs{w_0} \ge R_0 := \bar{r}_\ell^\al$.
After a rotation, we may assume that $w_0$ belongs to the positive $x$-axis.
Choose $r_1 \ge \bar{r}_1$ so that with $r_{k+1} = r_k^{\al_k + \de} + \frac 1 2 r_k - 1$ for each $k \in \set{1, \ldots, \ell - 1}$, it holds that $\abs{w_0} = r_\ell^\al$.
Set $z_k = \pr{r_k, 0} \in  \R^2_+$ for each $k \in \set{1, \ldots, \ell - 1}$ so that
\begin{equation}
\label{zkNorm}
\abs{z_{k}} = \abs{z_{k-1}}^{\al_{k-1} + \de} + \frac 1 2 \abs{z_{k-1}} - 1.
\end{equation}

With $\al = \al_1 > 1$, let $T_\al : \R^2_+ \to \R^2$ be as given in \eqref{TalDefn}, see Section \ref{TransMaps}. 
Define $v : \R^2_+ \to \R$ by $v(z) = u(T_\al(z))$.

\begin{clm}
For every $k \in \set{1, \ldots, \ell -1}$, 
\begin{align}
\label{claim}
\norm{v}_{L^\iny\pr{B_1(z_k)}}
&\ge \exp\pr{- \abs{z_k}^{\al_k + \de}}.
\end{align}
\end{clm}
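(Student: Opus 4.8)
The plan is to prove the Claim by induction on $k$, using Theorem \ref{LandisGrowth} to establish the base case $k=1$ and Proposition \ref{InductiveProp} (applied to the transformed equation) to pass from $k$ to $k+1$. First I would set up the transformed equation: by Lemma \ref{transformEqLemma}, $v(z) = u(T_\al(z))$ solves $-\LP v + W v = 0$ in $\R^2_+$, where $W(z) = \al^2\abs{z}^{2\al-2}V(T_\al(z))$. Using the bound \eqref{Vbound} on $V$ together with $\abs{T_\al(z)} = \abs{z}^\al$, one gets $\abs{W(z)} \le \al^2 a_0^2 \abs{z}^{2\al-2}\innp{z^\al}^{-N} \lesssim \al^2 a_0^2$ since $\al(2-N) = 2$ makes the powers of $\abs{z}$ cancel; so $W$ is essentially bounded by $(a_0\al)^2$ on the relevant region. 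Likewise the growth hypothesis \eqref{uBound} on $u$ gives $\abs{v(z)} = \abs{u(T_\al(z))} \le \exp(c_0\abs{z}^{\al(1-N/2)}) = \exp(c_0\abs{z})$, so $v$ has at-most-exponential growth with linear exponent — exactly the setup needed for the initialization step and for Proposition \ref{InductiveProp}.

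For the base case $k=1$: apply Theorem \ref{LandisGrowth} directly to $u$ (which satisfies its hypotheses, as noted in the excerpt). This gives $\inf_{\abs{w_0}=R}\norm{u}_{L^\iny(B_1(w_0))} \ge \exp(-\overline{C}_0 R\log^{3/2}R)$ for $R \ge \overline{R}_0$. Taking $R = r_1^\al = \abs{z_1}^\al = \abs{T_\al(z_1)}$ and using Lemma \ref{ballContainLemma} to transfer the ball estimate — specifically $B_1(T_\al(z_1)) \su T_\al(B_1(z_1))$, so $\norm{v}_{L^\iny(B_1(z_1))} = \norm{u\circ T_\al}_{L^\iny(B_1(z_1))} \ge \norm{u}_{L^\iny(B_1(T_\al(z_1)))}$ — we obtain $\norm{v}_{L^\iny(B_1(z_1))} \ge \exp(-\overline{C}_0 r_1^\al\log^{3/2}(r_1^\al)) = \exp(-\overline{C}_0\al^{3/2} r_1^\al\log^{3/2}r_1)$. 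Condition \eqref{rBig1} with $r = r_1 \ge \bar r_1$ forces $r_1^{\de} \ge \overline{C}_0\al^{3/2}\log^{3/2}r_1$, hence $\overline{C}_0\al^{3/2}r_1^\al\log^{3/2}r_1 \le r_1^{\al+\de} = \abs{z_1}^{\al_1+\de}$ (recall $\al_1 = \al$), which is exactly \eqref{claim} for $k=1$.

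For the inductive step, assume \eqref{claim} holds for $k$, i.e.\ $\norm{v}_{L^\iny(B_1(z_k))} \ge \exp(-\abs{z_k}^{\al_k+\de})$; equivalently there is a point $z_0'$ near $z_k$ with $\abs{v(z_0')} \ge \exp(-\abs{z_k}^{\al_k+\de})$. I would apply Proposition \ref{InductiveProp} on the ball $B_R$ with $R \approx \abs{z_{k+1}}$ (chosen so that $z_k \in \overline{B}_{R-S}$ with an appropriate annular width $S$), $a = a_0\al$, $c_1 = c_0$ (from the linear growth of $v$ above), and $L = \abs{z_k}^{\al_k+\de}$. The scaling relation \eqref{zkNorm} is engineered precisely so that the ratio $RS^{-1}$ and the term $c_1 R + L$ combine to make the resulting exponent $\tau$ satisfy $\tau \le \abs{z_{k+1}}^{\al_{k+1}+\de}/(\text{const})$ after applying \eqref{propConc} at radius $r = 1$; conditions \eqref{rBig2}--\eqref{rBig4} are the inequalities that make this bookkeeping work (the $\sqrt{\log}$ terms, the $c_d/\log R$ corrections, and the factor $5c_0+4$ all appear in $\tau$). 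Translating back, $\norm{v}_{L^\iny(B_1(z_{k+1}))} \ge (1/R)^\tau \ge \exp(-\tau\log R) \ge \exp(-\abs{z_{k+1}}^{\al_{k+1}+\de})$, closing the induction.

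The main obstacle I anticipate is the careful arithmetic matching $\tau(R,S,L,a,c_1)$ from Proposition \ref{InductiveProp} against the target exponent $\abs{z_{k+1}}^{\al_{k+1}+\de}$: one must check that with $R$ and $S$ chosen from \eqref{zkNorm}, each of the three pieces of $\tau$ — the $R\sqrt{\log R}$ term, the $(c_1R + L + c_d/\log R)RS^{-1}$ term, and the additive $\frac15(L + c_d/\log R)$ term — is dominated appropriately, and that the induction hypothesis $L = \abs{z_k}^{\al_k+\de}$ feeds correctly into the recursion $\al_{k+1} = 2 - 1/\al_k$. The key algebraic identity making this go through is that $\al_k + \de$ evolves under the recursion in a way compatible with the growth $\abs{z_{k+1}} \approx \abs{z_k}^{\al_k+\de}$, so that $(\al_k+\de)\cdot(\text{exponent of }\abs{z_k}\text{ in }\abs{z_{k+1}})$ telescopes down toward $\al_{k+1}+\de$; verifying this, together with ensuring all the smallness/largeness conditions \eqref{rBig1}--\eqref{rBig5} are simultaneously satisfiable for $r_1$ large, is where the real work lies. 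Everything else — the boundedness of $W$, the growth bound on $v$, the ball transfers via Lemma \ref{ballContainLemma} — is routine given the earlier results.
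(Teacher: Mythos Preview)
Your proposal is correct and follows essentially the same approach as the paper: induction on $k$, with the base case via Theorem~\ref{LandisGrowth} transferred through Lemma~\ref{ballContainLemma} and condition~\eqref{rBig1}, and the inductive step via Proposition~\ref{InductiveProp} applied to the translated ball centered at $z_{k+1}$, with the exponent arithmetic closed using the recursion $\al_{k+1} = 2 - 1/\al_k$ and conditions~\eqref{rBig2}--\eqref{rBig4}. One small correction: since Proposition~\ref{InductiveProp} is applied on a ball centered at $z_{k+1}$ rather than the origin, the growth bound on $v$ over that ball picks up $\abs{z_{k+1}} + R \le \tfrac{5}{2}R$, so the paper takes $c_1 = \tfrac{5c_0}{2}$ (this is why the constants in \eqref{rBig3}--\eqref{rBig5} carry the factor $5c_0+4$); and the paper makes the concrete choices $R = \abs{z_k}^{\al_k+\de}$, $S = \tfrac12\abs{z_k}$, $L = R$, which is exactly the specialization your sketch anticipates.
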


To prove the claim, we proceed by induction. 
Define $w_1 = T_\al(z_1) = (r_1^\al, 0)$. 
Since $r_1 \ge r_\al$, Lemma \ref{ballContainLemma} implies that $B_1(w_1) \su T_\al(B_1(z_1))$ and then
\begin{align*}
\norm{v}_{L^\iny\pr{B_1(z_1)}}
&= \norm{u \circ T_\al}_{L^\iny\pr{B_1(z_1)}}
= \norm{u }_{L^\iny\pr{T_\al(B_1(z_1))}}
\ge \norm{u}_{L^\iny\pr{B_1(w_1)}}.
\end{align*}
As $\abs{w_1} = r_1^\al \ge \overline{R}_0$, an application of \eqref{uLower0} from Theorem \ref{LandisGrowth} shows that
\begin{equation*}
\norm{u}_{L^\iny(B_1(w_1))} \ge \exp\pr{- \overline{C}_0 \abs{w_1} \log^{\frac 3 2} \abs{w_1}}.
\end{equation*}
Because $\abs{w_1} = \abs{z_1}^\al$, then $\overline{C}_0 \abs{w_1} \log^{\frac 3 2} \abs{w_1} = \overline{C}_0 \abs{z_1}^\al \log^{\frac 3 2} \abs{z_1}^\al= \overline{C}_0 \al^\frac 32 \abs{z_1}^\al \log^{\frac 3 2} \abs{z_1} \le \abs{z_1}^{\al +\de}$, where we have used \eqref{rBig1} to reach the last bound.
Combining these observations shows that
\begin{equation*}
\begin{aligned}
\norm{v}_{L^\iny\pr{B_1(z_1)}}
&\ge \norm{u}_{L^\iny\pr{B_1(w_1)}}
\ge \exp\pr{- \overline{C}_0 \abs{w_1} \log^{\frac 3 2} \abs{w_1}} 
\ge \exp\pr{- \abs{z_1}^{\al + \de}}.
\end{aligned}
\end{equation*}
In other words, \eqref{claim} holds for $k = 1$.

Now assume that \eqref{claim} holds for $k-1 \in \N$.
Define $S_k = \frac 1 2 \abs{z_{k-1}}$ and $R_k = \abs{z_{k-1}}^{\al_{k-1} +\de}$. 
Comparing with \eqref{zkNorm}, we see that $\abs{z_k} =  R_k + S_k - 1$.
Since $B_1(z_{k-1}) \su B_{R_k - S_k}(z_k)$, then the inductive hypothesis shows that there exists $z_0 \in \overline{B}_{R_k - S_k}(z_k)$ so that
\begin{align*}
\abs{v(z_0)}
\ge \exp\pr{- \abs{z_{k-1}}^{\al_{k-1} + \de}}
= \exp\pr{- R_k}.
\end{align*}
The bound in \eqref{uBound} shows that $\disp \abs{v(z)} = \abs{u(T_\al(z))} \le \exp\pr{c_0 \abs{z}^{\al\pr{1 - \frac N 2}}} = \exp\pr{c_0 \abs{z}}$ from which it follows that
\begin{align*}
\norm{v}_{L^\iny\pr{B_{R_k}(z_k)}}
\le \exp\pr{c_0 \pr{\abs{z_k} + R_k}}
\le \exp\pr{\frac {5c_0} 2 R_k}.
\end{align*}
Let $W : \R^2_+ \to \R$ be given by $W(z) = \al^2 \abs{z}^{2\al - 2} V(T_\al(z))= \al^2 \abs{w}^{2 - \frac 2 \al} V(w)$ so that by \eqref{Vbound},
\begin{align*}
\abs{W(z)} = \al^2 \abs{w}^{2 - \frac 2 \al} \abs{V(w)} \le a_0^2 \al^2 \abs{w}^{2 - N - \frac 2 \al} = \pr{a_0 \al}^2.
\end{align*}
An application of Lemma \ref{transformEqLemma} with assumption \eqref{ellipEq} shows that
\begin{equation*}
- \LP v + W v = 0 \; \text{ in } \R^2_+.
\end{equation*}
As $R_k \ge \bar{r}_1$ implies that $R_k \ge \hat R_0(a_0 \al)$ and $S_k \in \pr{2^8, \frac {R_k} 2}$, Proposition \ref{InductiveProp} is applicable with $a = a_0 \al$, $R= R_k$, $S= S_k$, $L = R_k$, and $c_1 =  \frac {5c_0} 2$.
With $r = 1$, Proposition \ref{InductiveProp} shows that
\begin{align}
\label{vkBd}
\norm{v}_{L^\iny(B_1(z_k))}
&\ge \exp\pr{-\tau_k(R_k) \log R_k},
\end{align}
where
\begin{equation}
\label{taukDefn}
\begin{aligned}
\tau_k(R_k) &= \max \set{3 a_0 \al C_H\sqrt{\frac{C_K }{32\ln 2}} R_k \sqrt{\log R_k}, 2^{15} \pr{\frac {5c_0} 2 + 1 + \frac {c_d}{R_k \log R_k}} R_k^{2 - \frac 1 {\al_{k-1} + \de}}} \\
&+\frac 1 5 \pr{R_k + \frac{c_d}{\log R_k}}.
\end{aligned}
\end{equation}
Because $R_k \ge \bar{r}_1$ and $\al_{k - 1} > 1$, conditions \eqref{rBig2} and \eqref{rBig3} imply that
\begin{align}
\label{tauKBound}
\tau_k(R_k) &\le \brac{2^{14} \pr{5c_0 + 4} + \frac 2 5} R_k^{2 - \frac 1 {\al_{k-1} + \de}} 
\end{align}
while condition \eqref{rBig4} implies that
$$ \brac{2^{14} \pr{5c_0 + 4} + \frac 2 5} \log R_k \le R_k^{\frac {\de^2} 2}.$$
Since $\al > 1$ and $\de \le 1$ implies that $2 - \frac{1}{\al + \de} + \frac{\de^2}2 < 2 - \frac 1 \al + \de$, then combining these bounds shows that
$$\tau_k(R_k) \log R_k \le R_k^{2 - \frac 1 {\al_{k-1}} + \de} = R_k^{\al_{k} + \de}.$$
Returning to \eqref{vkBd}, we conclude that
\begin{align*}
\norm{v}_{L^\iny(B_1(z_k))}
&\ge \exp\pr{-R_k^{\al_{k} + \de}}
\ge \exp\pr{-\abs{z_k}^{\al_{k} + \de}},
\end{align*}
establishing the claim given by \eqref{claim}.

Next we consider the behavior of $v$ on a ball centered at $z_\ell$.
The arguments in the previous paragraph show that Proposition \ref{InductiveProp} is applicable with $a = a_0 \al$, $R= R_\ell$, $S= S_\ell$, $L = R_\ell$, and $c_1 =  \frac {5c_0} 2$.
In particular, with $\disp r = \tilde r := \frac{\abs{z_\ell}^{1 - \al}}{2 \sqrt 3 \al}$, Proposition \ref{InductiveProp} shows that 
\begin{align*}
\norm{v}_{L^\iny(B_{\tilde r}(z_\ell))}
&\ge \pr{\frac{\tilde r}{R_\ell}}^{\tau_\ell(R_\ell)},
\end{align*}
where $\tau_\ell(R_\ell)$ is defined by \eqref{taukDefn} and the bound in \eqref{tauKBound} holds.
Because $R_\ell < \abs{z_\ell} < \frac 3 2 R_\ell$, then $\frac{R_\ell}{\tilde r} < \frac{4\al}{ \sqrt 3}  \pr{\frac 3 2 R_\ell}^{\al}$ and  condition \eqref{rBig5} implies that
$$\brac{2^{14} \pr{5c_0 + 4} + \frac 2 5} \log\pr{\frac{R_\ell}{\tilde r}} \le R_\ell^{\frac {5 \de^2}6 }.$$
Since $\al_{\ell - 1} > 1 + \de$ and $\de \le 1$ implies that $2 - \frac{1}{\al_{\ell - 1} + \de} + \frac {5 \de^2}6 < \al_\ell + \de \le 1 + 2 \de$, then $\tau_\ell(R_\ell) \log\pr{\frac{R_\ell}{\tilde r}} \le R_\ell^{1 + 2\de}$ and we conclude that
\begin{align*}
\norm{v}_{L^\iny(B_{\tilde r}(z_\ell))}
&\ge \exp\pr{- R_\ell^{1 + 2\de}}
\ge \exp\pr{-\abs{z_\ell}^{1 + 2\de}}.
\end{align*}
As $\abs{z_\ell} \ge r_\al$, an application of Lemma \ref{ballContainLemma} shows that $T_\al\pr{B_{\tilde r}(z_\ell)} \su B_1(T_\al(z_\ell))$ and then
\begin{align*}
\norm{u}_{L^\iny(B_{1}(T_\al(z_\ell)))}
\ge \norm{u}_{L^\iny(T_\al\pr{B_{\tilde r}(z_\ell)} )}
= \norm{v}_{L^\iny(B_{\tilde r}(z_\ell))}
&\ge \exp\pr{- \abs{z_\ell}^{1 + 2\de}}
= \exp\pr{-r_\ell^{1 + 2\de}}.
\end{align*}
Because we assumed that $w_0$ belongs to the positive $x$-axis and has $\abs{w_0} = r_\ell^\al$, then $T_\al(z_\ell) = \pr{r_\ell^\al, 0} = w_0$.
Since $\de = \frac{\eps}{2 - N}$, then $\frac{1 + 2\de}{\al} = \pr{1 - \frac N 2}\pr{1 + 2\de} = 1 - \frac N 2 + \eps$ and $r_\ell^{1 + 2\de} = \abs{w_0}^{1 - \frac N 2 + \eps}$.
Combining these observations shows that $\norm{u}_{L^\iny(B_{1}(w_0))} \ge \exp\pr{-\abs{w_0}^{1 - \frac N 2 + \eps}}$.
Since $w_0 \in \R^2$ with $\abs{w_0} \ge R_0$ was arbitrary, then \eqref{uLower} follows, as required.
\end{proof}

\bibliography{refs}
\bibliographystyle{alpha}

\end{document}